\renewcommand{\vec}[1]{\mathbf{#1}}
\newcommand\norm[1]{\left|\left|#1\right|\right|}
\newcommand\alg[1]{\left\langle#1\right\rangle}
\newcommand\argmin{\operatorname{argmin}}
\newcommand{\be}{\begin{equation}}
\newcommand{\ee}{\end{equation}}
\newcommand{\ba}{\begin{aligned}}
\newcommand{\ea}{\end{aligned}}
\newcommand{\ben}{\begin{enumerate}}
\newcommand{\een}{\end{enumerate}}
\newcommand{\bit}{\begin{itemize}}
\newcommand{\eit}{\end{itemize}}
\newtheorem{theorem}{Theorem}[section]
\newtheorem{corollary}{Corollary}[section]
\newtheorem{lemma}{Lemma}[section]
\newtheorem{proposition}{Proposition}[section]
\newtheorem{definition}{Definition}[section]
\newtheorem{remark}{Remark}[section]
\newcommand{\linebreakand}{%
  \end{@IEEEauthorhalign}
  \hfill\mbox{}\par
  \mbox{}\hfill\begin{@IEEEauthorhalign}
}
\newenvironment{proof}{\textit{Proof}:}{\hfill$\square$}
\begin{document}
\title{Aspects of a Generalized Theory of Sparsity based Inference in Linear Inverse Problems\thanks{This research is sponsored by the Office of Naval Research (ONR) Code 31, Grant 1401091801 (RGR), and Grants N00014-23-1-2394 and N00014-23-1-2790 (HNM).}}


\author{\IEEEauthorblockN{Raghu G. Raj\IEEEauthorrefmark{1}}
  \and
  \IEEEauthorblockN{Ryan O'Dowd\IEEEauthorrefmark{2}}
  \and
  \IEEEauthorblockN{\textcolor{black}{Owen T. Huber\IEEEauthorrefmark{3}}}
  \and
  \IEEEauthorblockN{Hrushikesh N. Mhaskar\IEEEauthorrefmark{4}}
\linebreakand
\IEEEauthorrefmark{1}U.S. Naval Research Laboratory, Radar Division, Washington D.C.\\
\IEEEauthorrefmark{2}MRI and Spectroscopy Section, National Institute on Aging, National Institutes of Health, Baltimore, MD\\
\IEEEauthorrefmark{3}Georgia Institute of Technology, Department of Mathematics, Atlanta, GA\\
\IEEEauthorrefmark{4}Institute of Mathematical Sciences, Claremont Graduate University, Claremont California
}

\maketitle


\begin{abstract}
Linear inverse problems are ubiquitous in various science and engineering disciplines. In recent decades, applying sparsity-based priors, such as $\ell_1$ priors, to linear inverse problems has led to the development of compressive sensing (CS) and sparsity-based signal processing. More recently, methods based on a Compound Gaussian (CG) prior have been investigated and demonstrate improved results over CS in practice. This paper is the first attempt to identify and elucidate the fundamental structures underlying the success of CG methods by studying CG in the context of a broader framework of generalized-sparsity-based-inference. After defining our notion of generalized sparsity, we introduce a weak null space property and proceed to generalize two well-known methods in CS, basis pursuit and iteratively reweighted least squares (IRLS). We show how a subset of CG-induced regularizers fits into this framework, and provide numerical simulations suggesting that examples of G-IRLS offer superior convergence rates in traditional sparsity problems and superior image reconstruction in tomographic imaging problems.
\end{abstract}

\section{introduction}
Linear inverse problems are ubiquitous in various science and engineering disciplines with applications in fields such as medical imaging (e.g., computer tomography (CT) and magnetic resonance imaging (MRI) reconstruction), geophysics (e.g., seismic tomography), astrophysics (e.g., image deconvolution), computational biology (e.g., gene expression analysis), signal processing (e.g., denoising and source separation), machine learning (e.g. feature selection in sparse regression models), and remote sensing (e.g., atmospheric data retrieval and image restoration). These problems take the form
\begin{equation}\label{eq:linearproblem}
	\mathbf{y}=A\mathbf{c}+\boldsymbol{\nu},
\end{equation}
where $\mathbf{y}\in \mathbb{R}^m$ is the observed data, $A\in \mathbb{R}^{m\times n}$ is the effective sensing matrix, $\boldsymbol{\nu}\in\mathbb{R}^m$ is the additive noise vector, and $\mathbf{c}\in\mathbb{R}^n$ is the vector of unknown coefficients to be estimated. \textcolor{black}{When $m\ll n$, we call the linear system underdeterminated (or overcomplete).}

The field of sparsity-based signal processing, and its sub-field of compressive sensing (CS), considers the problem of inverting \eqref{eq:linearproblem} under the assumption of a sparsity constraint on $\mathbf{c}$, \textcolor{black}{where sparsity refers to the $\ell_0$ (pseudo)-norm of the vector.} Numerous algorithms for solving this subclass of linear inverse problems have been studied over the past two decades including greedy \cite{CoSamp}, convex optimization \cite{l1ls}, and iterative thresholding \cite{ISTA}. The success of an algorithm in finding such a sparse solution \textcolor{black}{is guaranteed only when properties of the effective sensing matrix are satisfied}, such as the restricted isometry property \cite{candes-tao-rip} or null space property  \cite{IRLS, CohenDahmenDevore09}.

\textcolor{black}{A new class of algorithms and network structures for solving linear inverse problems based on Compound Gaussian (CG) induced regularizers\cite{CG-LS-Net, CG-DR-Net, HB-MAP}, which reduce to $\ell_1$-based algorithms under limiting conditions, have shown improvement over state-of-the-art algorithms when applied to tomographic imaging and CS problems.} Furthermore, a statistical learning theory for CG-based neural networks that theoretically confirms the numerical results obtained in tomographic imaging and CS applications has also been developed \cite{CG-IT1}.

Nevertheless, there is currently a lack of mathematical theory that gives a quantitative understanding of basic questions underlying CG inference, such as the precise nature of the generalization with respect to (w.r.t.) traditional sparsity based inference, convergence rates of CG algorithms etc.

This paper is a first attempt to identify and elucidate the fundamental structures underlying the success of CG methods by studying them in the context of a broader framework of generalized-sparsity-based-inference. We contribute and expand on existing literature for linear inverse problems and compressive sensing by demonstrating how this broadened notion of sparsity can be applied. In particular, we introduce a corresponding ``weak" null space property and use it to deduce new results, which generalize both basis pursuit and iteratively re-weighted least squares (IRLS). At the same time, we investigate how CG-based regularizers fit into this theory. The aim of this paper is construct the foundations towards a full theory for generalized-sparsity-based-inference with the CG prior.

Throughout this paper, we refer to vectors by lowercase bold-face letters. The notation $A^T$ denotes the transpose of the real matrix, $A$. The notation $\norm{\textbf{a}}_p$ denote the $\ell_p$-norm. The symbols $\mathbb{R}$, $\mathbb{R}^+$, and $\mathbb{N}$ denote, respectively, the real, positive real, and natural numbers. Finally, unless stated otherwise, all unbolded lowercase letters denote either scalars or sequences drawn from real or complex fields.

The rest of this paper is organized as follows. In Section~\ref{sec:background}, we cover relevant background information for this paper including an introduction to the CG-based inference methods (\ref{subsec:cg}). In Section~\ref{sec:gsparsity}, we introduce a new notion of generalized sparsity, and with it, define a new weak null space property. We conclude the section with a generalization of the theorem for basis pursuit. In Section~\ref{sec:cg-connection}, we explore how a subset of CG distributions generate regularizers that satisfy the requirements of the theory in this paper. In Section~\ref{sec:g-irls}, we introduce a generalized version of the IRLS algorithm and give conditions for when the algorithm will estimate regularized least squares solutions. In Section~\ref{sec:Num_Res} we present numerical results demonstrating the benefit of generalizing the IRLS algorithm. Lastly, in Section~\ref{sec:discussion}, we give closing remarks and point to some questions which can help guide the future work stemming from this paper.

\section{Background}
\label{sec:background}

In this paper, we examine the problem of recovering values of $\vec{c}$ from observations of the form $\vec{y}$ as in \eqref{eq:linearproblem}, where $A\in \mathbb{R}^{m\times n}$ is full rank, $\vec{c}\in\mathbb{R}^n$, $\vec{\nu}\in\mathbb{R}^m$, and $m\ll n$. We set 
\be
G_\vec{y}:=\{\vec{c}\in \mathbb{R}^n:A\vec{c}=\vec{y}\}.
\ee 
Define the set $[n]:=\{1,2,\dots,n\}$ and suppose $S\subseteq [n]$. For any $\vec{x}\in\mathbb{R}^n$, we introduce the notation $\vec{x}_S$ to be the vector with entries given by
\be
[\vec{x}_{S}]_i:=\begin{cases}x_i & \text{if }i\in S\\ 0 & \text{else}\end{cases}
\ee
for all $i\in [n]$.
Furthermore, we define subtraction of sets $X,Y\subseteq \mathbb{R}^n$ in the usual sense by
\be
X-Y=\{x-y:x\in X,y\in Y\}.
\ee
In Table~\ref{tab:dists}, we highlight some useful distributions \textcolor{black}{that are referenced} in this paper. We use the notation $V\overset d= W$ for random variables $V,W$ that are equal in distribution.

\begin{table}[!ht]
\begin{center}
\caption{Distributions used in this paper, including their name, symbol, and probability distribution functions.}
\label{tab:dists}
\footnotesize
\vspace{10pt}
\begin{tabular}{|p{15mm}|c|c|}
\hline
Name & Symbol & Pdf\\
\hline
Normal\newline distribution & $\mathcal{N}(\mu,\sigma^2)$ & $\frac{1}{\sqrt{2\pi}\sigma}e^{-(x-\mu)^2/2\sigma^2}$\\
\hline
Multivariate\newline Normal\newline distribution & $\mathcal{N}(\mu,\Sigma)$ & $\frac{1}{(2\pi)^{n/2}\Sigma^{1/2}}e^{-\vec{(x-\mu)}^T\Sigma^{-1}\vec{(x-\mu)}/2}$\\
\hline
Laplace\newline distribution & $\mathcal{L}(\mu,\lambda)$ & $\frac{\lambda}{2}e^{-\lambda|x-\mu|}$\\
\hline
Rayleigh\newline distribution & $\mathcal{Z}(\sigma^2)$ & $\begin{cases}\frac{x}{\sigma^2}e^{-x^2/2\sigma^2} &\text{if }x\geq 0\\ 0 &\text{else}\end{cases}$\\
\hline
\end{tabular}
\end{center}
\end{table}

\subsection{CG-based Inference}
\label{subsec:cg}

The CG approach to the problem \eqref{eq:linearproblem} can be understood through a Bayesian maximum a posteriori estimation with a prior on $\vec{c}$ of the form
\be
C\overset d=Z\odot U,
\ee
where $U\sim \mathcal{N}(0,\Sigma_U)$, and $Z$ is a mixing distribution supported on $[0,\infty)$. Given the existence of such a representation, we say the random variable, $C$ belongs to a \textit{CG distribution}. Then, with the assumption that $p_{Y|C}(\vec{y}|\vec{c})=\boldsymbol{\nu}\sim \mathcal{N}(A\vec{c}-\vec{y},\overline{\sigma} I)$, we \textcolor{black}{may define the loss function, $L$, from the maximum a posteriori (MAP) objective}
\be\label{eq:L}
\ba
L(\vec{c}):=&\underset{\vec{c}\in \mathbb{R}^n}{\operatorname{argmax}}\ p_{C|Y}(\vec{c}|\vec{y})\\
=&\underset{\vec{c}\in \mathbb{R}^n}{\operatorname{argmax}}\ p_{Y|C}(\vec{y}|\vec{c})p_{C}(\vec{c})\\
=&\underset{\vec{c}\in \mathbb{R}^n}{\operatorname{argmin}}-\log(p_{Y|C}(\vec{y}|\vec{c}))-\log(p_C(\vec{c}))\\
=&\underset{\vec{c}\in \mathbb{R}^n}{\operatorname{argmin}}\frac{1}{2\overline{\sigma}}\norm{A\vec{c}-\vec{y}}_2^2+R(\vec{c}),
\ea
\ee
where $R(\vec{c})=\log(p_C(0)/p_C(\vec{c}))$. We deduce from definitions that
\be\label{eq:cdist}\footnotesize
p_C(\vec{c})=\int_0^\infty \frac{1}{(2\pi)^{n/2}\Sigma_U^{1/2}}\exp\left(-\frac{\left(\vec{c}\odot \vec{z}^{-1}\right)^T\Sigma^{-1}_U\left(\vec{c}\odot \vec{z}^{-1}\right)}{2}\right)\frac{1}{\vec{z}}d\vec{z},
\ee
where $\vec{z}^{-1}=(1/z_1,1/z_2,\dots,1/z_n)$.
In the remainder of the paper we consider the case where $\Sigma_U=\operatorname{diag}(\sigma_1^2,\sigma_2^2,\dots,\sigma_n^2)$ in \eqref{eq:cdist}. Then $p_C(\vec{c})=\prod_{i\in [n]} p_{C_i}(c_i)$ where for each $i\in [n]$
\be\label{eq:CG_pdf}
p_{C_i}(c_i)=\int_0^\infty \frac{1}{\sqrt{2\pi}\sigma_i}\exp\left(-\frac{(c_i/z_i)^2}{2\sigma_i^2}\right)\frac{p_{Z_i}(z_i)}{z_i}dz_i.
\ee
Defining $R_i(c_i)=\log(p_{C_i}(0)/p_{C_i}(c_i))$, we write
\be\label{eq:R}
R(\vec{c})=\sum_{i\in[n]} R_i(c_i).
\ee
\begin{remark} 
When $p_C(\vec{c})=\prod_{i\in[n]} p_{C_i}(c_i)$ and each $C_i\sim \mathcal{L}(0,1)$ \textcolor{black}{we have} $R=\norm{\circ}_1$.
\end{remark}

\section{Weak null space property and generalized sparsity}
\label{sec:gsparsity}

In the remainder of the paper, we assume $R:\mathbb{R}^n\to \mathbb{R}$ is an even, subadditive function with $R(\vec{0})=\vec{0}$. First, we will state some properties of  $R$ that follow from these conditions. 

\begin{lemma}\label{lem:induction}
	If $R$ is zero at the origin, even, and subadditive, then for all integers $n > 0$,
	\begin{equation}
		R(nx) \leq nR(x).
	\end{equation}
\end{lemma}
\begin{proof}
	We will prove this by induction. The $n = 1$ case is trivial. For $n = 2$, we prove that $$R(2x) = R(x + x) \leq R(x) + R(x) \leq 2R(x).$$ Assume that $R(kx) \leq kR(x)$ for all $k \in \mathbb{N}$. Then,
	\begin{equation*}
		\begin{aligned}
			R((k + 1)x) &\leq R(x) + R(kx) \leq R(x) + kR(x),
		\end{aligned}
	\end{equation*}
 and hence $R((k + 1)x) \leq (k + 1)R(x)$.
\end{proof}

 \begin{lemma}
 	If $R$ is zero at the origin, even, and subadditive, then 
 	\begin{enumerate}
 		\item $R(\mathbf{x)} \geq 0$ for all $\mathbf{x} \in \mathbb{R}^n$.
 		\item $R$ cannot be strictly convex in a region containing the origin.
 		\item $R$ cannot be continuously differentiable on an interval including the origin. 
 		\item If $R(x) >0$ for some $x\in \mathbb{R}$, then, for all $\delta > 0$, there exists an $x$ such that $0 < x < \delta$ and $R(x) > 0$.
 	\end{enumerate}
 \end{lemma}
 \begin{proof}
 	
 	1) Using the subadditivity of $R$, we find that $$0 = R(0) = R(x + (-x)) \leq R(x) + R(-x) = 2R(x).$$
 	
 	2) Suppose, for sake of contradiction, that $R$ is strictly convex for all $x \in [-\delta, x^*]$ where $x^* > 0, \delta \geq 0$.
 	
 	By strict convexity, for all $t \in [0, 1]$, $$R(tx^*) < (1 - t)R(0) + tR(x^*) = tR(x^*).$$ It follows that $R(x^*/2) < R(x^*)/2$. 
 	
 	However, by Lemma \eqref{lem:induction}, $R(nx) \leq nR(x)$ for all $x$ and integers $n$. Choose $n = 2$ and $x = x^*/2$. Then, we have that $R(x^*) \leq 2 R(x^*/2)$ implying that $R(x^*/2) \geq R(x^*)/2,$ which is a contradiction.  
 	
 	3) Since $R$ is even, differentiability at the origin implies $R'(0) = 0$ \footnote{\textcolor{black}{This statement is offered as a lemma in the Appendix.}}. Next, we will prove that there  exists some sequences $\{\xi_n\}$ such that $\lim_{n \rightarrow \infty }\xi_n = 0$ while $\lim_{n \rightarrow \infty} R'(\xi_n) \neq 0$, which is a contradiction by the continuity of $R'$.
 	
 	Choose some $x^* > 0$ such that $R(x^*) > 0$. Then, by Lemma \ref{lem:induction}, for all integers $n > 0$, $R(x/n) \geq R(x)/n$. Then, it follows that $R(x^*/n) \geq R(x^*)/n$ for all $n$. By the mean value theorem, for all $n$, there exists some $0 < \xi_n < x^*/n$ such that $$R'(\xi_n) = \frac{R(x^*/n) - R(0)}{x^*/n} \geq \frac{R(x^*)}{x^*}.$$
 	Then, $\lim_{x \rightarrow 0}R'(0) \neq 0$ because for any $\epsilon > 0$ such that $\epsilon < R(x^*)/x^*$, there exist $\xi_n$ arbitrarily close to the origin where $|R'(\xi_n) - R'(0)| = R'(\xi_n) > \epsilon$. So, $R$ must not be continuously differentiable around zero, and we conclude our proof. 
 
 	4) We will prove this lemma by contradiction, supposing that $R$ takes a nonzero value on $\mathbb{R}$ and $R$ does not take a nonzero value arbitrarily close to the origin. Consider $\mathbf{X} = \{x \, | \, R(x) > 0, x>0\}$. $\mathbf{X}$ is not empty, as there exists $x$ for which $R$ takes nonzero values, and $\mathbf{X}$ is bounded below by a number greater than zero, so we can define $\tilde{x} = \inf \mathbf{X}$. Note that $\tilde{x} \neq 0$ because there exists no sequences $\{x_n\}$ that converge to a nonzero $x$ such that $R(x) = 0$. 
 	
 	If $R(\tilde{x}) = 0$, then for every $\epsilon > 0$, there exists some $x^*$ such that $\tilde{x} < x^* < \tilde{x} + \epsilon$ and $R(x^*) > 0$. Choosing a $x^*$ for $\epsilon = \tilde{x}/2$, it follows that $2R(x^*/2) = 0$ because $x^*/2 < \tilde{x}$. Then, $2R(x^*/2) = 0 < R(x^*)$ which is a contradiction of Lemma \ref{lem:induction}.
 	
 	If $R(\tilde{x}) > 0$, then we immediately notice that $2R(\tilde{x}/2) = 0$. A contradiction arises in $2R(\tilde{x}/2) < R(\tilde{x})$ concluding our proof.
 \end{proof}
 
\color{black}
Next, we introduce new definitions of  regularizer-specific, generalized sparse vectors and a weak null space property. We assume $\epsilon\geq 0$ and $B\subseteq \mathbb{R}^n$.

\begin{definition} We say that a vector $\vec{x}\in \mathbb{R}^n$ is $(K,R,\epsilon)$-\textit{sparse} if there exists $S\subseteq [n]$ with $|S|\leq K$ such that
\be\label{eq:gsparse}
R(\vec{x}_{[n]\setminus S})\leq \epsilon.
\ee
We denote the space of all such vectors by $\Sigma_{K,R,\epsilon}$. For a vector $\vec{x}\in\mathbb{R}^n$, We define the degree of approximation by $(K,R,\epsilon)$-sparse vectors by
\be
\sigma_{K,R,\epsilon}(\vec{x}):=\inf_{\vec{x}'\in \Sigma_{K,R,\epsilon}}R(\vec{x}-\vec{x}').
\ee
\end{definition}

We note the existence of alternative notions of generalized sparsity \cite{plan-sparse,junge-sparse}. The novelty of our definition is that it is regularizer dependent, allowing for use with a broader range of minimization problems.

\begin{definition} We say that the tuple $(B,R)$ satisfies the $(K,\gamma,\delta)$-\textit{weak null space property} if for any $S\subseteq [n]$ where $|S|\leq K$ and any $\vec{x}\in B- B$ we have
\be\label{eq:wnull}
R(\vec{x}_S)\leq \gamma R(\vec{x}_{[n]\setminus S})+\delta.
\ee
\end{definition}

We also introduce the set of near minimizers of $R$.

\begin{definition}
We say that a point $\vec{x}\in B$ is an $\epsilon$-\textit{near minimizer} of $R$ over $B$ if
\be
R(\vec{x})\leq \inf_{\vec{x}'\in B} R(\vec{x}')+\epsilon.
\ee
We denote the set of all such $\vec{x}$ values by $\mathcal{M}(R,\epsilon,B)$.
\end{definition}

We end this section with a theorem relating these definitions through a generalized compressive sensing lens. This theorem is an extension of the traditional theorem for the convergence of basis pursuit to a sparse solution \cite[Theorem 4.5]{Compressive_Sensing}.

\begin{theorem}
Let $B \subseteq \mathbb{R}^n$, $R:\mathbb{R}^n\to\mathbb{R}$ be a subadditive function, and $\epsilon\geq 0$. 
\begin{enumerate}
	\item If $(B,R)$ satisfies the $(K,1,\delta)$-null space property, then $B\cap \Sigma_{K,R,\epsilon}\subseteq \mathcal{M}(R,2\epsilon+\delta,B)$.
	
	\item Conversely, if for every $\vec{v}\in B-B$ and $S\subseteq [n]$ with $|S|\leq K$, $\vec{v}_S\in \mathcal{M}(R,\epsilon, B)$ and $-\vec{v}_{[n]\setminus S}\in B$, then $(B,R)$ satisfies the $(K,1,\epsilon)$-null space property.
\end{enumerate}
\end{theorem}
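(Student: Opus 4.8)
The plan is to run the classical null-space-property argument for exact recovery by basis pursuit (\cite[Theorem 4.5]{Compressive_Sensing}), with the $\ell_1$ norm replaced by the even subadditive $R$ and the slacks $\epsilon$ and $\delta$ carried through every inequality. Part (2) then falls out of the definition of $\mathcal{M}(R,\epsilon,B)$ almost immediately.

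For part (1), fix $\mathbf{x}\in B\cap\Sigma_{K,R,\epsilon}$ and choose $S\subseteq[n]$ with $|S|\le K$ witnessing $(K,R,\epsilon)$-sparsity, so $R(\mathbf{x}_{[n]\setminus S})\le\epsilon$. Let $\mathbf{z}\in B$ be arbitrary; it suffices to prove $R(\mathbf{x})\le R(\mathbf{z})+2\epsilon+\delta$ and then take the infimum over $\mathbf{z}\in B$. Put $\mathbf{v}:=\mathbf{x}-\mathbf{z}\in B-B$ and note the coordinatewise identities $\mathbf{x}_S=\mathbf{z}_S+\mathbf{v}_S$ and $\mathbf{v}_{[n]\setminus S}=\mathbf{x}_{[n]\setminus S}-\mathbf{z}_{[n]\setminus S}$. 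I would then chain four estimates: (i) subadditivity and $R(\mathbf{x}_{[n]\setminus S})\le\epsilon$ give $R(\mathbf{x})\le R(\mathbf{x}_S)+\epsilon$; (ii) subadditivity on $\mathbf{x}_S=\mathbf{z}_S+\mathbf{v}_S$ gives $R(\mathbf{x}_S)\le R(\mathbf{z}_S)+R(\mathbf{v}_S)$; (iii) the $(K,1,\delta)$-weak null space property, applied to $\mathbf{v}\in B-B$ and $S$, gives $R(\mathbf{v}_S)\le R(\mathbf{v}_{[n]\setminus S})+\delta$; (iv) subadditivity and evenness on $\mathbf{v}_{[n]\setminus S}=\mathbf{x}_{[n]\setminus S}-\mathbf{z}_{[n]\setminus S}$ together with $R(\mathbf{x}_{[n]\setminus S})\le\epsilon$ give $R(\mathbf{v}_{[n]\setminus S})\le\epsilon+R(\mathbf{z}_{[n]\setminus S})$. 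Concatenating (i)--(iv) yields $R(\mathbf{x})\le R(\mathbf{z}_S)+R(\mathbf{z}_{[n]\setminus S})+2\epsilon+\delta$, and the argument is closed by bounding $R(\mathbf{z}_S)+R(\mathbf{z}_{[n]\setminus S})\le R(\mathbf{z})$.

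Part (2) is the short direction. Fix $\mathbf{v}\in B-B$ and $S\subseteq[n]$ with $|S|\le K$. By hypothesis $\mathbf{v}_S\in\mathcal{M}(R,\epsilon,B)$ and $-\mathbf{v}_{[n]\setminus S}\in B$, so using the defining inequality of $\mathcal{M}(R,\epsilon,B)$ with the competitor $\mathbf{w}=-\mathbf{v}_{[n]\setminus S}\in B$ gives $R(\mathbf{v}_S)\le R(-\mathbf{v}_{[n]\setminus S})+\epsilon$; evenness of $R$ rewrites the right-hand side as $R(\mathbf{v}_{[n]\setminus S})+\epsilon$, which is exactly the $(K,1,\epsilon)$-weak null space inequality for this $\mathbf{v}$ and $S$. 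Since $\mathbf{v}$ and $S$ were arbitrary, $(B,R)$ has the $(K,1,\epsilon)$-weak null space property.

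The one delicate point is the last step of part (1): passing from $R(\mathbf{z}_S)+R(\mathbf{z}_{[n]\setminus S})$ back to $R(\mathbf{z})$. Subadditivity only gives $R(\mathbf{z})\le R(\mathbf{z}_S)+R(\mathbf{z}_{[n]\setminus S})$, i.e. the reverse of what is needed, so one genuinely relies on additivity of $R$ across the disjoint supports $S$ and $[n]\setminus S$. This holds with equality for $R=\norm{\circ}_1$ and, more relevantly, for the separable CG-induced regularizers $R(\mathbf{c})=\sum_{i\in[n]}R_i(c_i)$ with $R_i(0)=0$ treated later in the paper, so I expect the proof either to invoke this structural feature of $R$ or to keep it as a standing hypothesis. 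Everything else is routine bookkeeping with subadditivity and evenness, and I anticipate no further obstacle.
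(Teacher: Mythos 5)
Your proof is correct and follows essentially the same route as the paper: the same decomposition $\vec{x}_S=(\vec{x}-\vec{x}')_S+\vec{x}'_S$, the same application of the weak null space property to $\vec{x}-\vec{x}'\in B-B$ followed by splitting off $R(\vec{x}_{[n]\setminus S})\le\epsilon$, and the same one-line argument for the converse. The delicate point you flag at the end is genuine, and worth noting that the paper's own chain silently uses the same fact --- its third inequality absorbs $R(\vec{x}'_S)+R(\vec{x}'_{[n]\setminus S})$ into $R(\vec{x}')$, which is the reverse of subadditivity; this holds with equality for the separable regularizers $R(\vec{c})=\sum_{i\in[n]}R_i(c_i)$ with $R_i(0)=0$ that the paper actually considers, but does not follow from the stated hypotheses alone.
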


\begin{proof}
	
	1) \textcolor{black}{Choose some $\vec{x}\in B \cap \Sigma_{K,R,\epsilon}$, noting that $\vec{x}-\vec{x}'\in B-B$ for any $\vec{x}' \in B$. First, using the subadditivity of $R$ and the $(K,1,\delta)$-null space property of $(B, R)$, with some $S \subseteq [n]$ having $|S| \leq K$ whose existence is guaranteed from $\vec{x}$ being in $\Sigma_{K,R,\epsilon}$, we have that 
\begin{equation}
\begin{aligned}
R(\vec{x}_S)\leq& R((\vec{x}-\vec{x}')_S)+R(\vec{x}'_S)\\
\leq& R((\vec{x}-\vec{x}')_{[n]\setminus S})+R(\vec{x}'_S)+\delta.
\end{aligned}
\end{equation}
Using the subadditivity and evenness of $R$, 
\begin{equation}
	\begin{aligned}
		R(\vec{x}_S)\leq& R(\vec{x}_{[n]\setminus S})+R(\vec{x}'_{[n]\setminus S}) + R(\vec{x}'_S)+\delta\\
		 \leq& R(\vec{x}_{[n]\setminus S})+R(\vec{x}')+\delta.\\
		\leq& R(\vec{x}')+\delta+\epsilon.
	\end{aligned}
\end{equation}
Then, adding $R(\vec{x}_{[n]\setminus S})$ to both sides, and remembering that it is bounded above by $\epsilon$, we conclude that $R(\vec{x})\leq R(\vec{x}')+\delta+2\epsilon$.
}
Since $\vec{x}'\in B$ was arbitrary, this implies $\vec{x}\in \mathcal{M}(R,2\epsilon+\delta,B)$.
	
2) Let $\vec{v}\in B-B$ and $S\subseteq [n]$ with $|S|\leq K$. Given the assumptions of the theorem, we note
\be
R(\vec{v}_S)\leq \inf_{\vec{x}'\in B} R(\vec{x}')+\epsilon\leq R(-\vec{v}_{[n]\setminus S})+\epsilon=R(\vec{v}_{[n]\setminus S})+\epsilon.
\ee
\end{proof} 

\begin{remark}
One may notice that when $R=\norm{\circ}_1$, $\gamma=1$, $\delta,\epsilon=0$, and $B=G_\vec{y}$, then this generalized theorem reduces to \cite[Theorem 4.5]{Compressive_Sensing}.
\end{remark}

\section{Connection to the CG prior}
\label{sec:cg-connection}

In this section, we investigate a subset of CG priors, which we refer to as Compound Laplacian (CL), and outline some properties of their associated regularizers. We start with a proposition showing how the Laplacian distribution is a specific CG distribution with a Rayleigh distributed mixing variable. Since we are treating the random variables as independent in each vector component, we simply examine univariate distributions in the first two propositions of this section. In Proposition~\ref{prop:cg-subadditive}, we use these univariate results component-wise to demonstrate that regularizers from CL distributions exhibit the desired subadditive property.

\begin{proposition}\label{prop:cglaplace}
Let $\lambda,\sigma>0$ and
\be
\qquad Z\sim \mathcal{Z}(1/\sigma^2\lambda^2),\qquad U\sim \mathcal{N}(0,\sigma^2).
\ee
Then
\be
ZU\overset{d}{=}\mathcal{L}(0,\lambda).
\ee
\end{proposition}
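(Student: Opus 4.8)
The plan is to obtain the density of the scalar product $C:=ZU$ directly, by conditioning on the mixing variable $Z$ and recognizing the resulting one-dimensional integral as a Laplace density. First I would use independence of $Z$ and $U$: conditionally on $Z=z$ we have $C=zU\sim\mathcal{N}(0,z^2\sigma^2)$, while the $\mathcal{Z}(1/\sigma^2\lambda^2)$ density of $Z$ is $p_Z(z)=\sigma^2\lambda^2 z\,e^{-\sigma^2\lambda^2 z^2/2}$ on $[0,\infty)$. Hence the law of total probability gives
\be
p_C(c)=\int_0^\infty \frac{1}{\sqrt{2\pi}\,z\sigma}\exp\left(-\frac{c^2}{2z^2\sigma^2}\right)\sigma^2\lambda^2 z\,e^{-\sigma^2\lambda^2 z^2/2}\,dz=\frac{\sigma\lambda^2}{\sqrt{2\pi}}\int_0^\infty \exp\left(-\frac{c^2}{2\sigma^2 z^2}-\frac{\sigma^2\lambda^2}{2}z^2\right)dz,
\ee
the factor $z$ from $p_Z$ conveniently cancelling the $1/z$ from the Gaussian normalization.

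The crux is then the classical Gaussian-type integral $\int_0^\infty \exp(-a/z^2-bz^2)\,dz=\tfrac12\sqrt{\pi/b}\,e^{-2\sqrt{ab}}$ for $a,b>0$, which I would obtain by writing $a/z^2+bz^2=(\sqrt{b}\,z-\sqrt{a}/z)^2+2\sqrt{ab}$ together with the substitution-symmetry identity $\int_0^\infty \exp(-(\sqrt{b}\,z-\sqrt{a}/z)^2)\,dz=\tfrac12\sqrt{\pi/b}$ (a special case of Glasser's master theorem, provable by an elementary $z\mapsto a/(bz)$ symmetrization of the integral). Specializing to $a=c^2/(2\sigma^2)$ and $b=\sigma^2\lambda^2/2$ gives $2\sqrt{ab}=\lambda|c|$ and $\sqrt{\pi/b}=\sqrt{2\pi}/(\sigma\lambda)$, so substituting back collapses all the constants and leaves $p_C(c)=\tfrac{\lambda}{2}e^{-\lambda|c|}$, i.e.\ precisely the $\mathcal{L}(0,\lambda)$ density.

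I expect the only genuinely delicate point to be a self-contained justification of that integral identity; everything else is algebraic bookkeeping. If one prefers to sidestep it, an equivalent route is via transforms: since $\mathbb{E}[e^{itC}\mid Z]=e^{-t^2 Z^2\sigma^2/2}$ and $Z^2$ is exponential with rate $\sigma^2\lambda^2/2$ (the square of the Rayleigh variable), one computes $\mathbb{E}[e^{itC}]=\lambda^2/(\lambda^2+t^2)$, which is the characteristic function of $\mathcal{L}(0,\lambda)$; alternatively one can set $I(|c|)$ equal to the integral above, differentiate under the integral sign and rescale to derive $I'(|c|)=-\lambda I(|c|)$, then solve this ODE with $I(0)=\tfrac12\sqrt{2\pi}/(\sigma\lambda)$.
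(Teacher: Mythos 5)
Your proof is correct, but it takes a genuinely different route from the paper. The paper disposes of the proposition in two lines by citing \cite{ding-laplace-gsm} for the known Gaussian-scale-mixture representation of the Laplace distribution (namely that $\sqrt{E}\,U$ is Laplacian when $E$ is exponential and $U$ is Gaussian), and then simply checking that $\sqrt{E}\overset{d}{=}Z$, i.e.\ that the square root of an exponential variable is Rayleigh. You instead compute the marginal density of $ZU$ from scratch by conditioning on $Z$ and evaluating the resulting integral $\int_0^\infty \exp(-a/z^2-bz^2)\,dz=\tfrac12\sqrt{\pi/b}\,e^{-2\sqrt{ab}}$ via the Cauchy--Schl\"omilch symmetrization; your constants check out ($2\sqrt{ab}=\lambda|c|$, $\sqrt{\pi/b}=\sqrt{2\pi}/(\sigma\lambda)$, yielding exactly $\tfrac{\lambda}{2}e^{-\lambda|c|}$), and your characteristic-function fallback ($Z^2$ exponential with rate $\sigma^2\lambda^2/2$, giving $\lambda^2/(\lambda^2+t^2)$) is an equally valid and arguably cleaner shortcut. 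What your approach buys is self-containedness: the reader need not consult the cited reference, and the explicit integral makes transparent exactly how the Rayleigh parameter $1/(\sigma^2\lambda^2)$ conspires with the Gaussian variance $\sigma^2$ to produce the rate $\lambda$. What the paper's approach buys is brevity and a conceptual framing --- it exhibits the Rayleigh mixing variable as a reparametrization of the classical exponential variance-mixture, which is the structural fact the subsequent corollary on Compound Laplacian distributions actually exploits. The only point you should make airtight if writing this up is the justification of the integral identity (or, equivalently, of interchanging differentiation and integration in your ODE variant); your $z\mapsto \sqrt{a/b}/z$ symmetrization argument does this correctly.
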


\begin{proof}
In \cite{ding-laplace-gsm}, it was shown that a distribution of the form $\sqrt{E}U$, where $U$ is Gaussian and $E$ is an exponential random variable, is equivalent to the Laplacian distribution. By checking that $\sqrt{E}\overset d=Z$, we achieve the desired result.

\end{proof}

The following corollary is a direct consequence of Proposition~\ref{prop:cglaplace} and shows that CL distributions are a subset of CG distributions.

\begin{corollary}
	Any CL distribution defined as the multiplication of $L\sim \mathcal{L}(0,\lambda)$ and the random mixing variable $Z$ is equivalent to the CG distribution with the random mixing variable $Z^* = Z\overline{Z}$ for $\overline{Z}\sim \mathcal{Z}(1/\sigma^2\lambda^2)$ and the Gaussian $U\sim \mathcal{N}(0,\sigma^2)$.
\end{corollary}


We now give a proposition showing that any CL distribution yields a subadditive regularizer when defined through \eqref{eq:L}.
\begin{proposition}\label{prop:cg-subadditive}
Suppose $C=ZL$ is a CL distribution. Then $R$ as defined in \eqref{eq:R} is a subadditive, even function.
\end{proposition}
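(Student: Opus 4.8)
The plan is to establish, for each coordinate, that the univariate function $R_i(c_i) = \log(p_{C_i}(0)/p_{C_i}(c_i))$ is even and subadditive on $\mathbb{R}$; since $R(\vec{c}) = \sum_{i\in[n]} R_i(c_i)$ by \eqref{eq:R}, evenness and subadditivity of $R$ then follow immediately by summing, and $R(\vec 0)=\vec 0$ is clear. So the whole burden is on the one-dimensional claim.

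For evenness: the density $p_{C_i}$ is given by the mixture integral $\int_0^\infty \frac{1}{\sqrt{2\pi}\sigma_i}\exp(-(c_i/z_i)^2/2\sigma_i^2)\frac{p_{Z_i}(z_i)}{z_i}\,dz_i$, and the integrand depends on $c_i$ only through $c_i^2$, so $p_{C_i}(-c_i)=p_{C_i}(c_i)$, hence $R_i$ is even. For subadditivity, I would reduce to showing $R_i$ is nondecreasing in $|c_i|$ and concave (as a function of $|c_i|$) on $[0,\infty)$, or more directly that $R_i(a+b)\le R_i(a)+R_i(b)$ for $a,b\ge 0$; a clean route is: a nonnegative function $f$ on $[0,\infty)$ with $f(0)=0$ that is nondecreasing and concave is automatically subadditive, and evenness then upgrades this to all of $\mathbb{R}$. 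Monotonicity of $R_i$ in $|c_i|$ amounts to showing $p_{C_i}(c_i)$ is nonincreasing in $|c_i|$, which again follows from differentiating under the integral sign since each Gaussian factor $\exp(-(c_i/z_i)^2/2\sigma_i^2)$ is. The concavity (equivalently, log-concavity-type) statement is the substantive point: I would show $-\log p_{C_i}$ is concave in $|c_i|$ by invoking that $p_{C_i}$, being a scale mixture of Gaussians, has a completely monotone structure in $c_i^2$ — indeed $p_{C_i}(c_i) = g(c_i^2)$ where $g(t)=\int_0^\infty (2\pi)^{-1/2}\sigma_i^{-1} z_i^{-1} e^{-t/(2\sigma_i^2 z_i^2)} p_{Z_i}(z_i)\,dz_i$ is a Laplace-transform-type integral and hence completely monotone, so $\log g$ is convex in $t$; translating this through $t = c_i^2$ to the variable $s=|c_i|$ gives what is needed for subadditivity.

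The main obstacle I anticipate is the concavity/subadditivity step: complete monotonicity of $g(t)$ gives convexity of $\log g$ in $t=c_i^2$, but we need a statement in the variable $s=|c_i|$, and the change of variables $t\mapsto s^2$ does not preserve convexity of $\log g$ directly — one must check that $s\mapsto -\log g(s^2)$ is concave, which requires a short computation with first and second derivatives of $g$ (using log-convexity of $g$ in $t$, i.e. $g g'' \ge (g')^2$, together with $g' \le 0$). Alternatively, and perhaps more robustly, one can bypass concavity entirely and prove subadditivity of $R_i$ directly from the mixture representation: writing $e^{-R_i(a)} = g(a^2)/g(0)$, the desired inequality $e^{-R_i(a+b)} \ge e^{-R_i(a)}e^{-R_i(b)}$ becomes $g(0)\,g((a+b)^2)\ge g(a^2)\,g(b^2)$, and since $(a+b)^2 \le 2a^2 + 2b^2$ is the wrong direction, one instead uses that $g$ is nonincreasing together with a superadditivity-type bound for the exponents inside the Gaussian — this is exactly where the Laplacian base case ($R_i = |\cdot|$, which is exactly additive) serves as the guiding example, and the general CL case should inherit subadditivity because mixing over $Z$ can only "spread mass outward," making the tails heavier and the log-density more concave than Laplacian. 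I would present the argument via the completely-monotone/log-convex route as the cleanest, falling back to the direct integral estimate if a subtlety arises in the change of variables.
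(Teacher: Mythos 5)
Your overall skeleton (reduce to each coordinate, show $R_i$ is even, nondecreasing in $|c_i|$, and concave on $[0,\infty)$ with $R_i(0)=0$, then conclude subadditivity via evenness and monotonicity) is exactly the paper's strategy, and the evenness and monotonicity steps are fine. But the concavity step --- which you correctly identify as the substantive point --- does not go through along the route you propose, and the obstacle you flag is fatal rather than a ``subtlety.'' You work with the scale-mixture-of-Gaussians representation $p_{C_i}(c_i)=g(c_i^2)$ and argue that complete monotonicity of $g$ gives log-convexity of $g$ in $t=c_i^2$, hoping to transfer this to concavity of $s\mapsto -\log g(s^2)$. That transfer is false in general: writing $\phi=-\log g$, one has $\frac{d^2}{ds^2}\phi(s^2)=2\phi'(s^2)+4s^2\phi''(s^2)$ with $\phi'\ge 0$ and $\phi''\le 0$, so the sign is indeterminate; for the degenerate mixture (a pure Gaussian) one gets $R_i(c)=c^2/(2\sigma^2)$, which is convex and not subadditive. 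Hence no computation ``using log-convexity of $g$ in $t$ together with $g'\le 0$'' can close this step: you must exploit more about the mixing measure than the mere fact that $C_i$ is a scale mixture of Gaussians. Indeed the proposition is claimed only for CL distributions, and your argument never uses the Laplacian base.

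The paper's proof uses precisely that extra structure: for a CL variable, $p_{C_i}(c_i)=\int_0^\infty e^{-\lambda|c_i|/z}\,d\mu_i(z)$ with $d\mu_i=\tfrac{\lambda}{2}z^{-1}p_{Z_i}(z)\,dz$, so the density is a completely monotone --- hence log-convex, via the Schwarz inequality giving $p_{C_i}'(c)^2\le p_{C_i}(c)\,p_{C_i}''(c)$ --- function of $|c_i|$ itself, not of $c_i^2$. Then $R_i''=(p_{C_i}'^2-p_{C_i}''p_{C_i})/p_{C_i}^2\le 0$ on $(0,\infty)$ with no change of variables needed, and concavity together with $R_i(0)=0$, monotonicity, and evenness yields subadditivity on all of $\mathbb{R}$ exactly as you outline. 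Your fallback ``direct integral estimate'' is not a repair: as you yourself note, $(a+b)^2\le 2a^2+2b^2$ points the wrong way, and the heuristic that mixing ``spreads mass outward'' and makes the log-density ``more concave than Laplacian'' is not an argument. The fix is simply to start from the Laplacian-mixture representation rather than the Gaussian one.
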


\begin{proof}
Since $R$ is defined component-wise, it will be subadditive if it is subadditive in each component. 
Let $i\in [n]$.
We can represent the pdf of $C_i$ by
\be\label{eq:fc}
p_{C_i}(c_i)=\int_0^\infty e^{-\lambda|c_i|/z_i}d\mu_i(z_i),
\ee
where $d\mu_i=\frac{\lambda}{2}\frac{p_{Z_i}(z_i)}{z_i}dz_i$, and note that $p_{C_i}$ is an even, non-increasing function.
From the definition $R_i(c_i)=\log(p_{C_i}(0)/p_{C_i}(c_i))$, it is clear that $R_i$ is non-decreasing on $[0,\infty)$ and an even function, so that $R_i(c_i)=R_i(|c_i|)$. 
A simple calculation shows that for $c_i\ge 0$,
\be\label{eq:R''}
R''_i(c_i)=\frac{p'_{C_i}(c_i)^2-p''_{C_i}(c_i)p_{C_i}(c_i)}{p_{C_i}(c_i)^2}.
\ee
Using Schwarz inequality and \eqref{eq:fc}, it is not difficult to verify that $R_i''\le 0$, so that $R_i$ is a concave function on $[0,\infty)$.
Since $R_i(0)=0$, this implies that $R_i$ is subadditive on $[0,\infty)$. 
Now, since $R_i$ is non-increasing on $[0,\infty)$ and even on $\mathbb{R}$, we see that for any $x,y\in\mathbb{R}$,
$$
\begin{aligned}
R_i(x+y)&=R_i(|x+y|)\le R_i(|x|+|y|)\\
&\le R_i(|x|)+R_i(|y|)=R_i(x)+R_i(y);
\end{aligned}
$$
i.e., $R_i$ is sub-additive on $\mathbb{R}$.
\end{proof}

\section{Generalized iteratively reweighted least-squares (G-IRLS)}
\label{sec:g-irls}

We introduce a generalized version of the IRLS loss function from \cite{IRLS}:
\begin{equation}\label{eq:g-irls-L}
L(\vec{c},\vec{w},\epsilon)=\frac{1}{2}\sum_{j=1}^n (c_j)^2w_j+\epsilon^2w_j+f_j(w_j),
\end{equation}
where each $f_j$ is an implicitly defined function of the regularizer, $R_j$. If the weights $w_j$ are updated to become 
\begin{equation}\label{eq:weightsset}
	w_j^k=R_j\left(\sqrt{(c_j^k)^2+\epsilon^2}\right)/((c_j^k)^2+\epsilon^2)
\end{equation}
 for each $j\in [n]$, the first terms of $L$ will approach $\frac{1}{2}R(\mathbf{c})$ as desired. Hence, the update step
\begin{equation}
	\vec{w}^{k+1}=\underset{\vec{w}\in (\mathbb{R}^n)^+}{\argmin}\ L(\vec{c}^{k+1},\vec{w},\epsilon_{k+1}),
\end{equation}
which results in the weights
\begin{equation}
	 \vec{w}_j^{k+1} \gets (f_j')^{-1}(-(\vec{c}^{k+1}_j)^2-\epsilon_k^2),
\end{equation}
should be set to the quantity in \eqref{eq:weightsset}, leading to the differential equation
\be\label{eq:f-cond}
f_j'(R_j(x)/x^2)=-x^2
\ee
that defines $f_j$ in terms of $R_j$. In practice, we simply update $w_j^{k + 1}$ as the RHS of \eqref{eq:weightsset}. In the traditional IRLS, with $R$ defined as the $L_1$ norm, \eqref{eq:weightsset} resolves to $f_j(x)=1/x$ for each $j\in [n]$.

Our formulation of $L$ requires some further restrictions on the class of regularizers we consider in this section. In addition to $R$ being subadditive, even, and $R(\vec{0})=\vec{0}$, we would also like $R$ to be continuous, differentiable and invertible on $(0,\infty)$, and chosen so there exist $f_j$'s satisfying \eqref{eq:f-cond}.

  With $\mathbf{c}$ updated as 
  \begin{equation}
  	\vec{c}^{k+1}=\underset{\vec{c}\in G_y}{\argmin}\ L(\vec{c},\vec{w}^k,\epsilon_k),
  \end{equation}
  we consider the generalized IRLS (G-IRLS) algorithm in Algorithm~\ref{alg:g-irls}. We denote the iteration number of an output vector by a superscript in terms of $k$ and denote the vector resulting from $R_j$ being applied to each component of a vector $\mathbf{x}$ as $\mathbf{R}(\mathbf{x})$. We also define the function $r(\mathbf{x})_K$ as that which returns \textit{the $K^{th}$ largest element of $|\mathbf{x}|$}.

\begin{algorithm}
\caption{Generalized iteratively reweighted least squares (G-IRLS)}\label{alg:irls}
\textbf{Input:} Full-rank $A\in\mathbb{R}^{m\times n}$, $\vec{y}\in \mathbb{R}^m$, $K\in [N]$, $\overline{k}\in\mathbb{N},\overline{\epsilon}\in \mathbb{R}^+$.\\
\textbf{Initialization:} $\vec{w}^0=\vec{1}\in\mathbb{R}^n$, $\epsilon_0=n$, $k=0$.
\begin{algorithmic}
\While{$\epsilon_k\geq \overline{\epsilon}$ and $k\leq \overline{k}$}
\State $D_k \gets \operatorname{diag}(\vec{w}^k)^{-1}$
\State $\vec{c}^{k+1} \gets D_kA^T(AD_kA^T)^{-1}\vec{y}$
\State $\epsilon_{k+1}\gets \min\left\{\epsilon_k,r(\mathbf{R}(\vec{c}^{k+1}))_{K+1}\right\}$
\For{$j\in [n]$}
\State $\vec{w}_j^{k+1} \gets (f_j')^{-1}(-(\vec{c}^{k+1}_j)^2-\epsilon_k^2)$
\EndFor
\State $k\gets k+1$\Comment{increment $k$ by one and repeat loop.}
\EndWhile
\end{algorithmic}
\textbf{Return:} $\overline{\vec{c}}=\vec{c}^{k}$.
\label{alg:g-irls}
\end{algorithm}


The results of this section are focused on showing how G-IRLS can be used to obtain solutions belonging to $\Sigma_{K,R,\epsilon}$. We also give bounds in terms of the degree of approximation by $(K,R,\epsilon)$-sparse vectors. We begin this section with some preparatory results.

\begin{proposition}\label{prop:bestapproxbound}
Let $\vec{x}\in \mathbb{R}^n$ and choose $S\subseteq [n]$ to be the indices of the $K$ largest components of $\mathbf{R}(\vec{x})$. If $R$ is strictly increasing on $(0, \infty)$, then
\be\label{eq:Rbest}
R(\vec{x}_{[n]\setminus S})\leq \sigma_{K,R,\epsilon}(\vec{x})+\epsilon.
\ee
\end{proposition}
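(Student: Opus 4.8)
The plan is to play $R(\vec{x}_{[n]\setminus S})$ off against an arbitrary $(K,R,\epsilon)$-sparse competitor $\vec{x}'$ and then pass to the infimum. Throughout this section $R$ is component-wise, $R(\vec{x})=\sum_{j\in[n]}R_j(x_j)$, and the one structural fact I need beyond subadditivity is that each $R_j$ is nonnegative: from subadditivity and evenness, $0=R_j(0)=R_j(t+(-t))\le R_j(t)+R_j(-t)=2R_j(t)$. I also note that $\Sigma_{K,R,\epsilon}\neq\emptyset$ (it contains $\vec{0}$), so $\sigma_{K,R,\epsilon}(\vec{x})$ is a genuine infimum over a nonempty set; and if $K\ge n$ the claim is trivial with $S=[n]$, so I may assume $|S|=K$.

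First I would fix any $\vec{x}'\in\Sigma_{K,R,\epsilon}$ and let $T\subseteq[n]$ with $|T|\le K$ witness its sparsity, i.e.\ $\sum_{j\notin T}R_j(x'_j)\le\epsilon$. The combinatorial core of the argument is that, since $S$ collects the $K$ indices with largest $R_j(x_j)$ and all those values are nonnegative, any index set of size at most $K$ has a smaller (or equal) $R_j(x_j)$-mass than $S$; hence $\sum_{j\in T}R_j(x_j)\le\sum_{j\in S}R_j(x_j)$, which on taking complements is exactly
\[
R(\vec{x}_{[n]\setminus S})=\sum_{j\notin S}R_j(x_j)\;\le\;\sum_{j\notin T}R_j(x_j).
\]

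Next I would bound the right-hand side using component-wise subadditivity: $R_j(x_j)\le R_j(x_j-x'_j)+R_j(x'_j)$ for each $j$. Summing over $j\notin T$, invoking the sparsity of $\vec{x}'$ for the second sum and nonnegativity of the $R_j$ to enlarge the first sum to all of $[n]$, gives
\[
\sum_{j\notin T}R_j(x_j)\;\le\;\sum_{j\notin T}R_j(x_j-x'_j)+\sum_{j\notin T}R_j(x'_j)\;\le\;R(\vec{x}-\vec{x}')+\epsilon .
\]
Chaining the two displays yields $R(\vec{x}_{[n]\setminus S})\le R(\vec{x}-\vec{x}')+\epsilon$, and since $\vec{x}'\in\Sigma_{K,R,\epsilon}$ was arbitrary, taking the infimum over such $\vec{x}'$ gives \eqref{eq:Rbest}.

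The only step requiring any thought is the first inequality above — that zeroing the top-$K$ coordinates in the $R_j(x_j)$-ordering minimizes the remaining mass among all supports of size at most $K$ — and this is precisely where nonnegativity of the $R_j$ enters; everything else is routine subadditivity bookkeeping. (If one wanted to avoid the component-wise assumption, the same proof goes through whenever $R(\vec{v})=R(\vec{v}_S)+R(\vec{v}_{[n]\setminus S})$ for complementary supports and $R$ is monotone under zeroing coordinates, but the component-wise setting is all that is used here.)
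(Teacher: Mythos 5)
Your proof is correct, but it takes a genuinely different route from the paper's. The paper first shows that $\Sigma_{K,R,\epsilon}$ is closed (via a sequential compactness/contradiction argument that uses the continuity and invertibility of $R$ on $(0,\infty)$ assumed in Section~\ref{sec:g-irls}), so that the infimum defining $\sigma_{K,R,\epsilon}(\vec{x})$ is attained by some $\overline{\vec{x}}$; it then asserts $\overline{\vec{x}}_S=\vec{x}_S$ and concludes with a single subadditivity step, splitting $\vec{x}_{[n]\setminus S}$ as $(\vec{x}-\overline{\vec{x}})_{[n]\setminus S}+\overline{\vec{x}}_{[n]\setminus S}$ and bounding the second term by $\epsilon$. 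You instead never attain the infimum: you compare against an arbitrary competitor $\vec{x}'$ with witness set $T$, use nonnegativity of the $R_j$ (which you correctly derive from evenness, subadditivity, and $R_j(0)=0$) together with the combinatorial fact that the top-$K$ selection $S$ maximizes the captured $R_j(x_j)$-mass, and then pass to the infimum at the end. What your route buys is economy of hypotheses: you need only the component-wise structure, evenness, subadditivity, and $R(\vec{0})=0$, and you avoid both the topological assumptions behind the closedness argument and the paper's unproved claim that the best approximant satisfies $\overline{\vec{x}}_S=\vec{x}_S$ and is witnessed by the same set $S$ — a claim your argument simply does not need. What the paper's route buys is the by-product that $\Sigma_{K,R,\epsilon}$ is closed and best approximants exist, which is of independent interest but not needed for \eqref{eq:Rbest}. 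Note that your argument does lean on the regularizer being of the additive form $R(\vec{x})=\sum_{j\in[n]}R_j(x_j)$; this is consistent with the paper's usage in Section~\ref{sec:g-irls} (the statement itself speaks of ``components of $R(\vec{x})$''), and you flag the issue appropriately in your closing parenthetical.
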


\begin{proof}
	First we will show that $\Sigma_{K, R, \epsilon}$ is a closed set. Let $(\vec{x}^{k})_k$ be a convergent sequence with $\vec{x}^k\in \Sigma_{K,R,\epsilon}$ for all $k\in \mathbb{N}$ and with the limit $\vec{x}^*$. For sake of contradiction, suppose that $R(\vec{x}^*_{[n]\setminus S})> \epsilon$ for all $S\subseteq [n]$ with $|S|\leq K$. We equivalently suppose that 
	\begin{equation}\label{eq:equal_state}
		R(\vec{x}^*_{[n]\setminus S}) \geq \epsilon + \delta 
	\end{equation}
	for the $\delta > 0$ defined by $$\delta = \underset{S \subseteq [n] \text{ s.t. } |S| \leq K}{\inf}  R(\vec{x}_{[n] \setminus S})  - \epsilon.$$

 Since $R$ is continuous, $R(\vec{0})=\vec{0}$, and $R$ is invertible on $(0,\infty)$, there exists some $N$ such that when $k\geq N$, 
\be
\lVert \vec{x}^{k}-\vec{x}^* \rVert_{\infty}<R^{-1}\left(\frac{\delta}{2n}\right).
\ee
Since $\vec{x}^{k}\in\Sigma_{K,R,\epsilon}$, there exists $\bar{S}\subseteq[n]$ with $|\bar{S}|\leq K$ such that
\be
\ba
R(\vec{x}^*_{[n]\setminus \bar{S}})\leq&  R(\vec{x}^{k}_{[n]\setminus \bar{S}})+R(\vec{x}^{k}_{[n]\setminus \bar{S}}-\vec{x}^*_{[n]\setminus \bar{S}})\\
\leq& \epsilon+nR\left(R^{-1}\left(\frac{\delta}{2n}\right)\right)=\epsilon+\frac{\delta}{2}.
\ea
\ee
This is a contradiction of \eqref{eq:equal_state}, so there must exist some $S$ such that $R(\vec{x}^*_{[n]\setminus S})\leq \epsilon$, implying that $\Sigma_{K,R,\epsilon}$ is a closed set. Thus, there exists some $\overline{\vec{x}}\in \Sigma_{K,R,\epsilon}$ such that $\sigma_{K,R,\epsilon}(\vec{x})=R(\vec{x}-\overline{\vec{x}})$. Furthermore, when $\overline{\vec{x}}_S=\vec{x}_S$, using the subadditivity of $R$, we conclude that
\be
\ba
R(\vec{x}_{[n]\setminus S})=&R(\vec{x}_{[n]\setminus S}-\overline{\vec{x}}_{[n]\setminus S}+\overline{\vec{x}}_{[n]\setminus S})\\
\leq& R((\vec{x}-\overline{\vec{x}})_{[n]\setminus S})+R(\overline{\vec{x}}_{[n]\setminus S})\\
\leq& \sigma_{K,R,\epsilon}(\vec{x})+\epsilon.
\ea
\ee
\end{proof}

\begin{lemma}\label{lem:1}
Suppose that $(B,R)$ satisfies the $(K,\gamma,\delta)$-null space property with parameter $\gamma<1$. Then, for any $\vec{x},\vec{x}'\in B$,
\be\label{eq:lem1result1}
R(\vec{x}'-\vec{x})\leq\frac{\gamma+1}{1-\gamma}\left(R(\vec{x}')-R(\vec{x})+2\sigma_{K,R,\epsilon}(\vec{x})+2\epsilon+\delta\right).
\ee
Let $\vec{x}^*\in \Sigma_{K,R,\epsilon}$. Then $\vec{x}^*\in\mathcal{M}(R,2\epsilon+\delta,B)$ and for any $\vec{x}\in B$ we have
\be\label{eq:lem1result2}
R(\vec{x}-\vec{x}^*)\leq 2\frac{\gamma+1}{1-\gamma}(\sigma_{K,R,\epsilon}(\vec{x})+2\epsilon+\delta).
\ee
\end{lemma}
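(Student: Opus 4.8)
The plan is to reduce both parts to the weak null space property: Proposition~\ref{prop:bestapproxbound} supplies a suitable support set, and the second part bootstraps off \eqref{eq:lem1result1} together with part~(1) of the basis pursuit theorem of Section~\ref{sec:gsparsity}. I would use throughout that the regularizer in this section is component-wise, $R(\vec{c})=\sum_{j\in[n]}R_j(c_j)$, hence additive over disjoint supports ($R(\vec{x})=R(\vec{x}_S)+R(\vec{x}_{[n]\setminus S})$ for every $S\subseteq[n]$), and that subadditivity together with evenness gives the reverse triangle inequality $R(\vec{a}+\vec{b})\ge R(\vec{a})-R(\vec{b})$.

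For \eqref{eq:lem1result1}: fix $\vec{x},\vec{x}'\in B$, set $\vec{v}=\vec{x}'-\vec{x}\in B-B$, and use Proposition~\ref{prop:bestapproxbound} to pick $S\subseteq[n]$ with $|S|\le K$ consisting of the indices of the $K$ largest components of $R(\vec{x})$, so that $R(\vec{x}_{[n]\setminus S})\le\sigma_{K,R,\epsilon}(\vec{x})+\epsilon$. The heart of the argument is a bound on $R(\vec{v}_{[n]\setminus S})$: from $\vec{x}'_S=\vec{x}_S+\vec{v}_S$ and $\vec{x}'_{[n]\setminus S}=\vec{x}_{[n]\setminus S}+\vec{v}_{[n]\setminus S}$, additivity over $S,[n]\setminus S$ and the reverse triangle inequality give $R(\vec{x}')\ge R(\vec{x}_S)-R(\vec{v}_S)+R(\vec{v}_{[n]\setminus S})-R(\vec{x}_{[n]\setminus S})$, and substituting $R(\vec{x}_S)=R(\vec{x})-R(\vec{x}_{[n]\setminus S})$ and rearranging yields $R(\vec{v}_{[n]\setminus S})\le R(\vec{x}')-R(\vec{x})+2R(\vec{x}_{[n]\setminus S})+R(\vec{v}_S)$. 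I would then apply the $(K,\gamma,\delta)$-weak null space property to $\vec{v}\in B-B$, $R(\vec{v}_S)\le\gamma R(\vec{v}_{[n]\setminus S})+\delta$, and use $\gamma<1$ to solve for $R(\vec{v}_{[n]\setminus S})$, obtaining $R(\vec{v}_{[n]\setminus S})\le(1-\gamma)^{-1}\big(R(\vec{x}')-R(\vec{x})+2R(\vec{x}_{[n]\setminus S})+\delta\big)$. Finally $R(\vec{v})\le R(\vec{v}_S)+R(\vec{v}_{[n]\setminus S})\le(1+\gamma)R(\vec{v}_{[n]\setminus S})+\delta$ by the weak null space property once more, and combining this with the previous bound and $R(\vec{x}_{[n]\setminus S})\le\sigma_{K,R,\epsilon}(\vec{x})+\epsilon$ gives \eqref{eq:lem1result1} after collecting the additive error terms contributed by the two applications of the weak null space property.

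For the second part: since $R$ is even and subadditive with $R(\vec{0})=\vec{0}$, we have $R\ge0$ (as $0=R(\vec{x}+(-\vec{x}))\le2R(\vec{x})$), so the $(K,\gamma,\delta)$-weak null space property with $\gamma<1$ implies the $(K,1,\delta)$-weak null space property; part~(1) of the basis pursuit theorem of Section~\ref{sec:gsparsity} then gives $\vec{x}^*\in\mathcal{M}(R,2\epsilon+\delta,B)$ (reading $\vec{x}^*\in B\cap\Sigma_{K,R,\epsilon}$). By the definition of $\mathcal{M}$, testing against $\vec{x}\in B$ gives $R(\vec{x}^*)-R(\vec{x})\le2\epsilon+\delta$. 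Applying \eqref{eq:lem1result1} with $\vec{x}'=\vec{x}^*$, using evenness $R(\vec{x}-\vec{x}^*)=R(\vec{x}^*-\vec{x})$, and feeding $R(\vec{x}^*)-R(\vec{x})\le2\epsilon+\delta$ into its right-hand side, the factor $\frac{\gamma+1}{1-\gamma}$ multiplies $(2\epsilon+\delta)+2\sigma_{K,R,\epsilon}(\vec{x})+2\epsilon+\delta=2\big(\sigma_{K,R,\epsilon}(\vec{x})+2\epsilon+\delta\big)$, which is exactly \eqref{eq:lem1result2}.

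I expect the first part to be the main obstacle. Since $R$ is merely subadditive rather than a norm, one cannot split $R(\vec{x}')$ or bound $R$ of a restriction by $R$ of the whole vector; the proof must route through additivity of the component-wise $R$ over the complementary supports $S$ and $[n]\setminus S$, used in exactly the right place, combined with the reverse triangle inequality and two uses of the weak null space property, with $\gamma<1$ essential to close the self-referential estimate for $R(\vec{v}_{[n]\setminus S})$. The only delicate bookkeeping is an honest accounting of the additive $\delta$ terms generated by those two applications; everything else is routine rearrangement.
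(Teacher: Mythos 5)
Your argument is essentially the paper's own proof: the same choice of $S$ as the indices of the $K$ largest components of $R(\vec{x})$ via Proposition~\ref{prop:bestapproxbound}, the same splitting of $R(\vec{x}')$ over $S$ and $[n]\setminus S$ combined with the reverse triangle inequality to get $R(\vec{v}_{[n]\setminus S})\le R(\vec{v}_S)+R(\vec{x}')-R(\vec{x})+2\sigma_{K,R,\epsilon}(\vec{x})+2\epsilon$, two applications of the weak null space property with $\gamma<1$ to close the self-referential estimate, and the second part obtained by specializing \eqref{eq:lem1result1} to $\vec{x}'=\vec{x}^*$ (your detour through the basis pursuit theorem for $\vec{x}^*\in\mathcal{M}(R,2\epsilon+\delta,B)$ is cosmetically different from the paper, which reads it off \eqref{eq:lem1result1} and nonnegativity of $R$ directly, but both are valid). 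The one point worth recording is that the ``honest accounting'' you flag yields the numerator $(1+\gamma)\left(R(\vec{x}')-R(\vec{x})+2\sigma_{K,R,\epsilon}(\vec{x})+2\epsilon\right)+2\delta$ rather than the stated $(1+\gamma)(\cdots+\delta)$, an extra additive $\delta$ that is equally present in the paper's own combination step, so your derivation is faithful to (and no weaker than) the published argument.
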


\begin{proof}
In this proof only, we denote the indices of the $K$ largest values of $\mathbf{R}(\vec{x})$ by $S$. We can see
\be\label{eq:lem1-1}
\ba
R((\vec{x}'&-\vec{x})_{[n]\setminus S})\leq R(\vec{x}'_{[n]\setminus S})+\sigma_{K,R,\epsilon}(\vec{x})+\epsilon\\
=&R(\vec{x}_S)-R(\vec{x}'_S)+R(\vec{x}')-R(\vec{x})+2\sigma_{K,R,\epsilon}(\vec{x})+2\epsilon\\
\leq&R((\vec{x}'-\vec{x})_S)+R(\vec{x}')-R(\vec{x})+2\sigma_{K,R,\epsilon}(\vec{x})+2\epsilon.
\ea
\ee
By the $(K,\gamma,\delta)$-null space property, Equation~\eqref{eq:lem1-1}, and the assumption $\gamma<1$ we can see
\be\label{eq:lem1-2}
R((\vec{x}'-\vec{x})_S)\leq \frac{\gamma \left(R(\vec{x}')-R(\vec{x})+2\sigma_{K,R,\epsilon}(\vec{x})+2\epsilon\right)+\delta}{1-\gamma}.
\ee
Combining \eqref{eq:lem1-1}~and~\eqref{eq:lem1-2}, we get \eqref{eq:lem1result1} since $(1+\gamma)\delta>\delta$. Supposing that $\vec{x}^*$ is $(K,R,\epsilon)$-sparse, then $\sigma_{K,R,\epsilon}(\vec{x}^*)=0$. By \eqref{eq:lem1result1} we can then see $R(\vec{x}^*)\leq R(\vec{x})+2\epsilon+\delta$ for any $\vec{x}\in B$, implying that $\vec{x}^*\in \mathcal{M}(R,2\epsilon+\delta,B)$. Then \eqref{eq:lem1result2} is just a consequence of \eqref{eq:lem1result1} in the context $\vec{x}'=\vec{x}^*$.
\end{proof}

Before stating our theorem, we note that $( \epsilon_k )_k$ is a monotonically decreasing sequence bounded below by $0$, so it is convergent. We define $\epsilon:=\lim_{k\to \infty}\epsilon_k$ and assume that $\epsilon>0$, because otherwise it would imply the existence of an exactly $K$-sparse solution and one would have no need to use G-IRLS over IRLS. Essential to our theorem and proof are the functions
\be\label{eq:h}
h_{j,\epsilon}(x):=\frac{1}{2}\left[R_j\left(\sqrt{x^2+\epsilon^2}\right)+f_j\left(\frac{R_j\left(\sqrt{x^2+\epsilon^2}\right)}{x^2+\epsilon^2}\right)\right].
\ee
We note that $h_{j,\epsilon}$ is a differentiable function since $\epsilon>0$ and $R$ is differentiable on $(0,\infty)$ by assumption.
\begin{theorem}\label{thm:g-irls}
Suppose $\vec{y}\in \mathbb{R}^m$, $K\in [n]$, and $\gamma<1$ are given such that $(G_\vec{y},R)$ satisfies the $(K,\gamma,\delta)$-null space property. 
\begin{enumerate}
	\item If $h_{j,\epsilon}$ is a strictly convex function for each $j\in [n]$, then the generalized IRLS algorithm converges to a vector $\overline{\vec{c}}\in G_y\cap \Sigma_{K,R,\epsilon}\cap \mathcal{M}(R,2\epsilon+\delta,G_\vec{y})$. Furthermore, for any $\kappa \leq K$ and any $\vec{x}\in G_y$ we have
	\be\label{eq:g-irls1}
	R(\vec{x}-\overline{\vec{c}})\leq 2\frac{1+\gamma}{1-\gamma}(\sigma_{\kappa ,R,\epsilon}(\vec{x})+2\epsilon+\delta).
	\ee
	
	\item If $G_y\cap \Sigma_{\kappa,R,\epsilon}\neq \emptyset$ for some $0<\kappa<K-\frac{4+6\gamma}{1-\gamma}$, then 
		\begin{equation}\label{eq:g-irls2}
			\epsilon\leq\frac{2(1+\gamma)\delta}{(1-\gamma)(K-\kappa)-4-6\gamma}.
		\end{equation}
\end{enumerate}
\end{theorem}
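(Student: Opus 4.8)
The plan is to adapt the classical IRLS convergence proof \cite{IRLS} to the decomposable regularizer $R=\sum_{j\in[n]}R_j$ of \eqref{eq:R} and to the weak null space property. First I would recast Algorithm~\ref{alg:g-irls} as an alternating minimization of \eqref{eq:g-irls-L} with monotonically decreasing value. By the compatibility condition \eqref{eq:f-cond}, the weight update is exactly $w_j^{k+1}=\operatorname{argmin}_{w>0}\tfrac12[((c_j^{k+1})^2+\epsilon_{k+1}^2)w+f_j(w)]=R_j(t_j)/t_j^2$ with $t_j=\sqrt{(c_j^{k+1})^2+\epsilon_{k+1}^2}$, so that $\mathcal{L}_k:=L(\vec{c}^k,\vec{w}^k,\epsilon_k)=\sum_{j\in[n]}h_{j,\epsilon_k}(c_j^k)$ for $k\ge1$, with $h_{j,\epsilon}$ as in \eqref{eq:h}. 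Chaining the $\vec{c}$-step, the $\vec{w}$-step, and the monotonicity of $\epsilon\mapsto L(\vec{c},\vec{w},\epsilon)$ for positive weights together with $\epsilon_{k+1}\le\epsilon_k$ shows $\mathcal{L}_k$ is nonincreasing; as each $h_{j,\epsilon}$ is bounded below, $\mathcal{L}_k$ converges. I would then observe that an even, differentiable, strictly convex function is coercive (its derivative vanishes at $0$ by evenness and is strictly increasing by strict convexity), so each $h_{j,\epsilon}$ is coercive, and that $h_{j,\cdot}$ is nondecreasing in $\epsilon$; since $h_{j,\epsilon}(c_j^k)\le h_{j,\epsilon_k}(c_j^k)$ and $\sum_i h_{i,\epsilon_k}(c_i^k)=\mathcal{L}_k\le\mathcal{L}_1$ with all summands bounded below, the iterates $(\vec{c}^k)_k$ are bounded. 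Because $\epsilon_k\ge\epsilon>0$ and, by the assumed positivity and invertibility of the $R_j$ on $(0,\infty)$, the weights $w_j^k=R_j(\sqrt{(c_j^k)^2+\epsilon_k^2})/((c_j^k)^2+\epsilon_k^2)$ then lie in a fixed compact subset of $(0,\infty)^n$.

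Next I would promote this to convergence of the whole sequence. Writing $\mathcal{L}_k-\mathcal{L}_{k+1}$ as a sum of three nonnegative decrements, each tends to $0$; the first, bounded below by $\tfrac{1}{2}\big(\min_j w_j^k\big)\norm{\vec{c}^{k+1}-\vec{c}^k}^2$ by strong convexity of the weighted least-squares objective with uniformly positive weights, forces $\norm{\vec{c}^{k+1}-\vec{c}^k}\to0$. Any subsequential limit $\vec{c}^*$ is then a fixed point of the iteration map, which is continuous on the compact weight set, and the fixed-point relation $\operatorname{diag}(\vec{w}^*)\vec{c}^*\in\operatorname{range}(A^T)$ with $w_j^*=R_j(\sqrt{(c_j^*)^2+\epsilon^2})/((c_j^*)^2+\epsilon^2)$ is exactly the stationarity condition for the strictly convex program $\min\{F_\epsilon(\vec{c})=\sum_jh_{j,\epsilon}(c_j):\vec{c}\in G_\vec{y}\}$; strict convexity makes its minimizer unique, so every subsequential limit equals it and $\vec{c}^k\to\overline{\vec{c}}$. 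To put $\overline{\vec{c}}$ in $\Sigma_{K,R,\epsilon}$, take $S$ to be the $K$ indices of largest $R_j(c^*_j)$ and use $R=\sum_jR_j$ to identify $r(R(\vec{c}^k))_{K+1}$ with the tail mass $R(\vec{c}^k_{[n]\setminus S^k})$; passing the recursion $\epsilon_{k+1}=\min\{\epsilon_k,R(\vec{c}^{k+1}_{[n]\setminus S^{k+1}})\}$ to the limit with continuity of $R$ gives $\epsilon\le R(\overline{\vec{c}}_{[n]\setminus S})$, while the recursion being binding in the limit yields the matching upper bound, so $R(\overline{\vec{c}}_{[n]\setminus S})\le\epsilon$. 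Finally, $\overline{\vec{c}}\in\mathcal{M}(R,2\epsilon+\delta,G_\vec{y})$ and \eqref{eq:g-irls1} follow from Lemma~\ref{lem:1} with $B=G_\vec{y}$, $\vec{x}^*=\overline{\vec{c}}$, and the in-force $(K,\gamma,\delta)$-null space property ($\gamma<1$), after weakening $\sigma_{K,R,\epsilon}$ to $\sigma_{\kappa,R,\epsilon}$ for $\kappa\le K$ via $\Sigma_{\kappa,R,\epsilon}\subseteq\Sigma_{K,R,\epsilon}$.

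For part (2), fix $\vec{z}\in G_\vec{y}\cap\Sigma_{\kappa,R,\epsilon}$ with witness $S_z$, $|S_z|\le\kappa$, $R(\vec{z}_{[n]\setminus S_z})\le\epsilon$. From $\epsilon\le r(R(\overline{\vec{c}}))_{K+1}$ one gets that $T:=\{j:R_j(\overline{c}_j)\ge\epsilon\}$ has $|T|\ge K+1$, hence $|T\setminus S_z|\ge K+1-\kappa$. Using $R=\sum_jR_j$, subadditivity, and $R(\vec{x}_V)\le R(\vec{x})$ for $V\subseteq[n]$, I would bound
\[
R(\overline{\vec{c}}-\vec{z})\;\ge\;R\big((\overline{\vec{c}}-\vec{z})_{T\setminus S_z}\big)\;\ge\;R(\overline{\vec{c}}_{T\setminus S_z})-R(\vec{z}_{T\setminus S_z})\;\ge\;(K+1-\kappa)\epsilon-\epsilon\;=\;(K-\kappa)\epsilon,
\]
and, since $\sigma_{K,R,\epsilon}(\overline{\vec{c}})=0$ by part (1), Lemma~\ref{lem:1} with $\vec{x}^*=\overline{\vec{c}}$, $\vec{x}=\vec{z}$ gives $R(\overline{\vec{c}}-\vec{z})\le2\tfrac{1+\gamma}{1-\gamma}(2\epsilon+\delta)$. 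Combining these and solving the linear inequality for $\epsilon$—legitimate exactly when $(1-\gamma)(K-\kappa)>4+6\gamma$, i.e.\ $\kappa<K-\tfrac{4+6\gamma}{1-\gamma}$—yields \eqref{eq:g-irls2}; the exact constants are recovered by the slightly sharper bookkeeping already present in the proof of Lemma~\ref{lem:1}.

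The hard part will be the full-sequence convergence together with the identification of the limiting tail mass. Unlike the $\ell_1$ case one must separately check coercivity of $h_{j,\epsilon}$, continuity of the weighted least-squares map on the relevant compact weight set (which is where the positivity and invertibility of the $R_j$ on $(0,\infty)$ and the standing assumption $\epsilon>0$ keep the weights away from $0$ and $\infty$), and—most delicately—that the $\epsilon_k$-recursion pins $R(\overline{\vec{c}}_{[n]\setminus S})$ to exactly $\epsilon$ and not merely to a lower bound. The decomposition $R=\sum_jR_j$, hence the monotonicity $R(\vec{x}_V)\le R(\vec{x})$, is used throughout both parts and should be regarded as an implicit standing hypothesis of this section.
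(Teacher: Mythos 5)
Your overall strategy for Part 1 matches the paper's: monotone decrease of the surrogate $\sum_j h_{j,\epsilon_k}(c_j^k)$, boundedness of the iterates, $\norm{\vec{c}^{k+1}-\vec{c}^k}\to 0$, identification of every subsequential limit with the unique minimizer of $g_\epsilon(\vec{c})=\sum_j h_{j,\epsilon}(c_j)$ over $G_{\vec{y}}$ via the weighted orthogonality relation $\alg{\vec{c}^*,\vec{p}}_{\vec{w}^*}=0$ for $\vec{p}\in G_{\vec{0}}$ together with strict convexity, and then Lemma~\ref{lem:1} for the error bounds. (Your coercivity argument supplies a justification for boundedness that the paper leaves implicit.) For Part 2 you take a genuinely different route: instead of the paper's chain $(K+1-\kappa)\epsilon\le (K+1-\kappa)r(R(\overline{\vec{c}}))_{K+1}\le \sigma_{\kappa,R,\epsilon}(\overline{\vec{c}})+\epsilon$ followed by Proposition~\ref{prop:bestapproxbound} and Lemma~\ref{lem:1}, you lower-bound $R(\overline{\vec{c}}-\vec{z})$ directly by counting the coordinates in $T=\{j:R_j(\overline{c}_j)\ge\epsilon\}$ outside the sparsity witness. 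This is valid (given the decomposability of $R$, which you correctly flag as a standing hypothesis) and in fact yields the slightly sharper denominator $(1-\gamma)(K-\kappa)-4-4\gamma$, which implies \eqref{eq:g-irls2}.

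There is, however, a concrete gap in your step placing $\overline{\vec{c}}$ in $\Sigma_{K,R,\epsilon}$, and it interacts badly with your Part 2. First, you cannot simultaneously read $r(R(\vec{c}))_{K+1}$ as the tail mass $R(\vec{c}_{[n]\setminus S})$ (as you do in Part 1) and as the $(K+1)$-st largest entry of $(R_j(c_j))_j$ (as your Part 2 requires when you conclude $|T|\ge K+1$ from $\epsilon\le r(R(\overline{\vec{c}}))_{K+1}$); the cited Lipschitz property of the nonincreasing rearrangement supports the latter reading, and your two parts use inconsistent interpretations of the same algorithmic quantity. Second, and more seriously, your claim that ``the recursion being binding in the limit yields the matching upper bound'' is not justified. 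Passing $\epsilon_{k+1}=\min\{\epsilon_k,\tau_{k+1}\}$ to the limit gives only $\epsilon\le\lim_k\tau_k$, i.e., the lower bound; if the minimum is eventually always achieved by $\epsilon_k$, the recursion is never binding on the $\tau$ side and you obtain no upper bound on the tail at all. The paper closes this by a case split: either $\epsilon_k$ is eventually constant, or there is a strictly decreasing subsequence $(\epsilon_{k_i})$ along which $\epsilon_{k_i-1}>r(R(\vec{c}^{k_i}))_{K+1}$, so that $r(R(\vec{c}^{k_i}))_{K+1}=\epsilon_{k_i}\to\epsilon$, and then the Lipschitz continuity of the rearrangement transfers the bound to $\overline{\vec{c}}$. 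You need some version of that subsequence argument; ``binding in the limit'' does not substitute for it.
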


\begin{proof}
Part 1) 
Since $\epsilon>0$ by assumption, there exists some $C>0$ such that $w_j^k\geq C$ for all $j\in[n]$, $k\in \mathbb{N}$. As a consequence of the same argument as in \cite[Lemma 5.1]{IRLS}, we have
\be
\sum_{k=1}^\infty \norm{\vec{c}^k-\vec{c}^{k+1}}_2^2\leq \lim_{k\to\infty} 2L(\vec{c}^k,\vec{w}^k,\epsilon_k)/C\leq 2H/C,
\ee
which implies that $\lim_{k\to \infty} \vec{c}^k-\vec{c}^{k+1}=0$.
we know that the sequence $(\vec{c}^k)_k$ has a convergent subsequence, since it is bounded. Let $(\vec{c}^{k_i})_i$ be such a subsequence, with limit $\vec{c}^*$. The sequence $(w_j^{k_i})_i$ converges for each $j$ and, in particular,
\be
w^*_j:=\lim_{i\to\infty} w_j^{k_i}=\frac{R_j\left(\sqrt{(c_j^*)^2+\epsilon^2}\right)}{(c_j^*)^2+\epsilon^2}.
\ee
Let $\vec{p}\in G_\vec{0}$. We note \cite[Sec. 2]{IRLS} that for each $i$, $\alg{\vec{c}^{k_i},\vec{p}}_{\vec{w}^{k_i}}=0$, so $\alg{\vec{c}^*,\vec{p}}_{\vec{w}^*}=0$ in the limit as $i\to \infty$.
If we define
\be
g_\epsilon(\vec{c}):=\sum_{j=1}^n h_{j,\epsilon}(c_j),
\ee
then we can observe from the strict convexity and differentiability of each $h_{j,\epsilon}$ that
\be\label{eq:g-convex}
g_\epsilon(\vec{v})> g_\epsilon(\vec{c}^*)+\sum_{j=1}^n h'_{j,\epsilon}(c_j^*)(v_j-c_j^*),
\ee
for any $\vec{v}\in G_\vec{y}$. Using \eqref{eq:f-cond} and the fact that $\vec{v}-\vec{c^*}\in G_\vec{0}$, one can deduce that 
\be
\sum_{j=1}^n h'_{j,\epsilon}(c_j^*)(v_j-c_j^*)=\alg{\vec{c}^*,\vec{v}-\vec{c}^*}_{\vec{w}^*}=0,
\ee
which, with \eqref{eq:g-convex}, implies that $\vec{c}^*$ is the unique minimizer of $g_\epsilon$. Since the limit of any convergent subsequence of $(\vec{c}^k)_k$ is $\vec{c}^*$, we know that the sequence itself is convergent and $\overline{\vec{c}}=\vec{c}^*$.

If $\epsilon_k=\epsilon$ for some $k$, then $\vec{c}^{j}\in \Sigma_{K,R,\epsilon}$ for all $j\geq k$ and thus, $\overline{\vec{c}}\in \Sigma_{K,R,\epsilon}$. Otherwise, $\epsilon_k>\epsilon$ for all $k$, in which case there exists a strictly decreasing subsequence of $(\epsilon_k)_k$, which we index by $(\epsilon_{k_i})_i$. We note then that $\epsilon_{k_i-1}> r(R(\vec{c}^{k_i}))_{K+1}$ by construction, so
\be
r(R(\overline{\vec{c}}))_{K+1}=\lim_{i\to \infty} r(R(\vec{c}^{k_i}))_{K+1} \leq\epsilon,
\ee
since by \cite[Lemma~4.1]{IRLS} $r$ is Lipschitz with constant $1$ in $\ell_\infty$ norm.
Thus, $\overline{\vec{c}}\in \Sigma_{K,R,\epsilon}$. Lemma~\ref{lem:1} gives us \eqref{eq:g-irls1} and the fact that $\overline{\vec{c}}\in \mathcal{M}(R,2\epsilon,G_\vec{y})$.

Part 2) Suppose that there exists some $\vec{x}\in G_\vec{y}\cap \Sigma_{\kappa,R,\epsilon}$. By \eqref{eq:g-irls1} we have
\be
\ba
(K+&1-\kappa)\epsilon\leq(K+1-\kappa)r(R(\overline{\vec{c}}))_{K+1}\\
&\le\sigma_{\kappa,R,\epsilon}(\overline{\vec{c}})+\epsilon\leq R(\overline{\vec{c}}-\vec{x})+\sigma_{\kappa,R,\epsilon}(\vec{x})+\epsilon\\
&\le\frac{1+\gamma}{1-\gamma}(3\sigma_{\kappa,R,\epsilon}(\vec{x})+5\epsilon+2\delta)=\frac{1+\gamma}{1-\gamma}(5\epsilon+2\delta).
\ea
\ee
Then, rearrangement with the assumption that $K-\kappa>\frac{4+6\gamma}{1-\gamma}=5\frac{1+\gamma}{1-\gamma}-1$ completes the proof.
\end{proof}

\begin{remark}
Our theorem gives conditions for when $\overline{\vec{c}}$ is a near-minimizer of $R$ over $G_\vec{y}$, and bounds on $\epsilon$ based on the existence of generalized-sparse vectors in the solution space. One may note that G-IRLS reduces to IRLS when $\overline{\epsilon}=0$ and $R=\norm{\circ}_1$ (which implies $f_j(x)=1/x$ for each $j\in [n]$). In this case, when $\epsilon,\delta=0$, though $h_{j,0}$ are not strictly convex, one can show that the conclusions of our theorem reduce to statements equivalent to \cite[Theorem~5.3(i),(iv)]{IRLS}.
\end{remark}

\section{Numerical Results}
\label{sec:Num_Res}
In this section, we present numerical results that detail the performance of G-IRIS w.r.t. traditional sparsity based approaches including IRLS. We study the convergence rates of the algorithm in \ref{subsec:convrates} followed by performance in tomographic imaging in \ref{subsec:tomorecov}

\subsection{Convergence rates}
\label{subsec:convrates}

An advantage of the additional regularization terms available in G-IRLS is the accelerated rate of convergence for traditional sparsity problems. In \cite{IRLS}, where the regularizer acts as an $\ell_{\tau}$ penalty, it is shown that for $\tau = 1$, the algorithm is guaranteed to converge to the solution at a linear rate (if a K-sparse solution exists), while for $0 < \tau < 1$, though there is no guarantee of convergence, the iterations often converge at a superlinear rate. It is shown that once iterations become sufficiently close to the solution, convergence accelerates rapidly, which the authors suggest is due to the iterations entering a neighborhood of the solution where convergence is guaranteed for the concave regularization term.

It can be shown that every concave function $\tilde{R} : [0, \infty) \rightarrow \mathbb{R}$ such that $R(0) = 0$ and $R(x) \geq 0$ has the even extension $R(x) = \tilde{R}(|x|)$ that is subadditive, so we may use any regularization functions of this form in our G-IRLS framework. As done in \cite{IRLS}, it is advantageous to use $R(x)  = |x|$ for the first few iterations then a strictly concave regularization term afterwards for accelerated convergence.

\begin{figure*}[t]
	\centering
	\includegraphics[width=1.5\columnwidth]{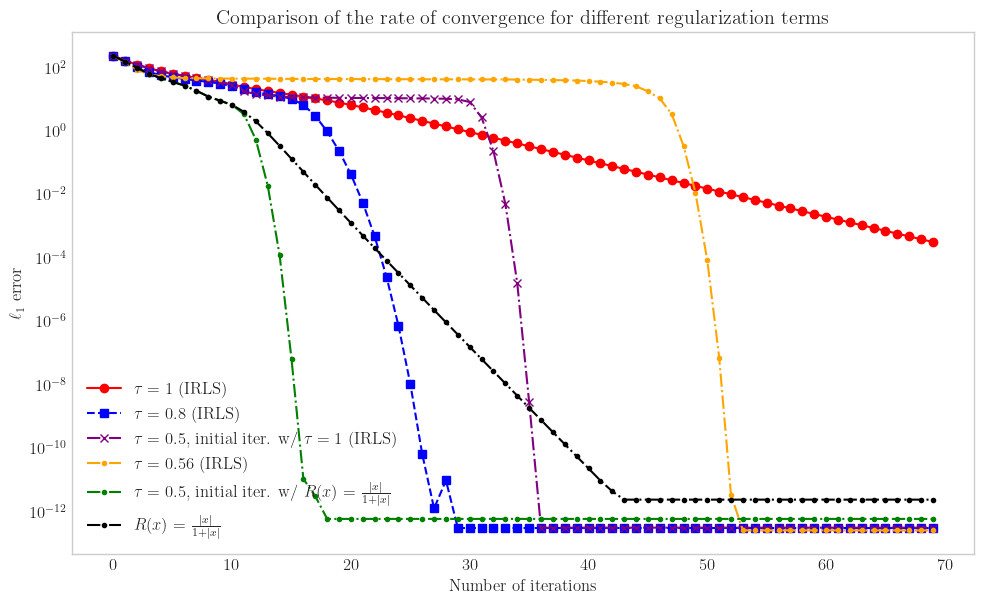}
	\caption{We show the decay of the logarithmic error for four examples of IRLS ($\tau = 1, \tau = 0.8, \tau = 0.5$ where the first ten iteration use $\tau = 1$, and $\tau = 0.56$) and two examples of G-IRLS, the first of which we perform G-IRLS with $R(x) = |x|/(1+|x|)$ for the first ten iterations then IRLS with $\tau = 0.5$ for all following iterations, and the second of which we use $R(x) = |x|/(1+|x|)$ for all iterations. }
	\label{fig:conv_rates}
\end{figure*}

 We use a sensing matrix $\Phi$ of size $250 \times 1500$ with Gaussian $\mathcal{N}(0, 1/250)$ i.i.d. entries and a ground truth of a 45-sparse vector in which the nonzero entries are sampled from a Gaussian. Results in Fig. \ref{fig:conv_rates} demonstrate that a mixture of using G-IRLS with $R(x) = |x|/(1 + |x|)$ outperforms all standard $\ell_{\tau}$ regularization terms for the standard sparse-recovery problem from \cite{IRLS}. We also note that the $R(x) = |x|/(1 + |x|)$ appears to show linear convergence that has a superior convergence rate to the $\tau = 1$ IRLS case. Finding other concave regularization terms that optimize the rate of convergence for the sparsity problem is a topic of future research. 

\subsection{Enhanced recovery in the tomographic setting}
\label{subsec:tomorecov}

It has been shown that the CG distribution with the log-normal distributed mixing variable, $Z \sim \text{Lognormal}(\mu, \sigma^2)$ having the pdf $f_Z(z) = \exp(-(\ln x - \mu)^2/2\sigma^2)/(x\sigma\sqrt{2\pi})$, effectively describes the statistics of wavelet coefficients for natural images \cite{CG-LS-Net, CG-IT1}. Accordingly, we use \eqref{eq:CG_pdf} to find that, when $\sigma = 1, \mu = 0$, the CG distribution with mixing variable $Z$ and an i.i.d. Gaussian with unit variance will have the pdf 
\begin{equation*}\label{eq:pdf_c}
	p_{C_i}(c_i) = \int_0^{\infty} \frac{1}{2\pi z_i^2 \sigma_i}\exp\left( -\frac{(c_i/z_i)^2}{2\sigma_i^2} -\frac{(\ln(z))^2}{2} \right) dz_i.
\end{equation*}
Using \eqref{eq:R}, we define the regularization term obtained from this distribution as 
\begin{equation}\label{eq:CG_R}
	R_{CG}(x) = \log\left( \frac{p_{C_i}(0)}{p_{C_i}(x)} \right),
\end{equation}
which can be calculated via numerical integration. 

In our experiments, we use a Radon transform with a number of uniformly spaced angles, $\Psi \in \mathbb{R}^{m \times n}$, as a sensing matrix, and refer to $\Phi \in \mathbb{R}^{n \times n}$ as our wavelet dictionary. Given the measurement $\mathbf{y} = \Psi \mathbf{c}$, where $\mathbf{c}$ is an image reshaped into a one-dimensional vector, we aim to recover $\mathbf{c}$ by solving $\mathbf{y} = \Psi \Phi \mathbf{x}$ for $\mathbf{x}$, wherein an approximation for $\mathbf{c}$ may be found from premultiplying $\mathbf{x}$ by the dictionary, $\Phi$. 

\begin{figure}[t]
	\centering
	\includegraphics[width=\columnwidth]{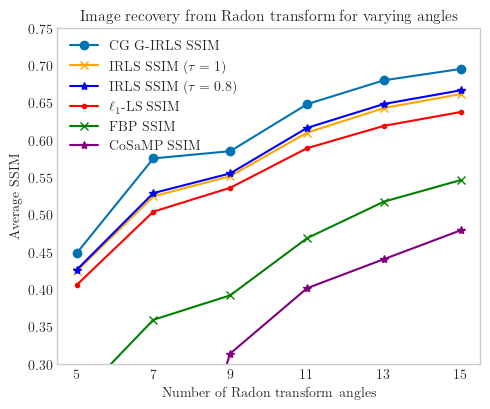}
	\caption{For the recovery problem involving sensing matrices of Radon transforms, with numbers of angles varying from 5 to 15, multiplied by a wavelet dictionary, we plot the average SSIM returned by G-IRLS with a CG regularizer and IRLS with $\tau = 1$ and $\tau = 0.8$. }
	\label{fig:different_angles}
\end{figure}

\begin{figure}[t]
	\centering
	
	\begin{subfigure}{0.32\columnwidth}
		\centering
		\includegraphics[width=\linewidth]{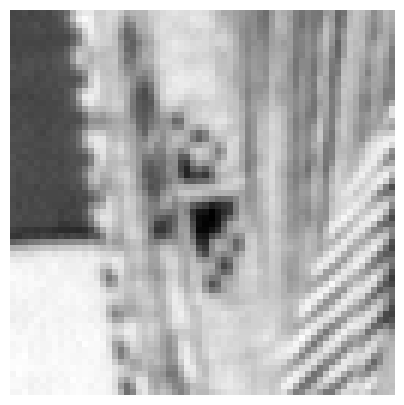}
		\caption{Original}
	\end{subfigure}
	\hfill
	\begin{subfigure}{0.32\columnwidth}
		\centering
		\includegraphics[width=\linewidth]{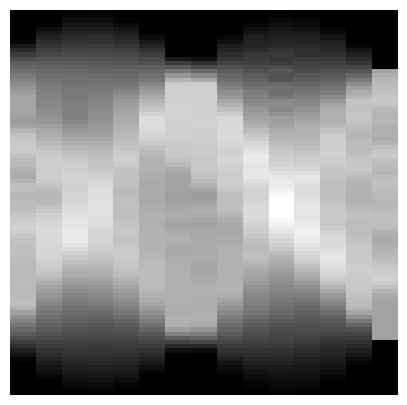}
		\caption{Radon Transform}
	\end{subfigure}
	\hfill
	\begin{subfigure}{0.32\columnwidth}
		\centering
		\includegraphics[width=\linewidth]{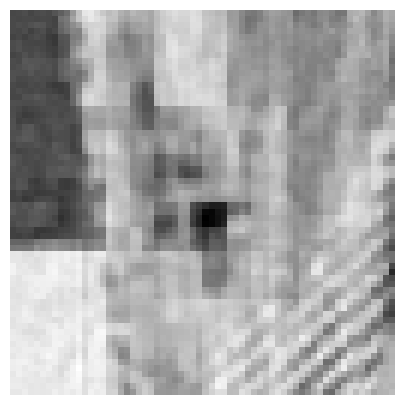}
		\caption{G-IRLS (0.761)}
	\end{subfigure}
	
	\vspace{2mm}
	
	\begin{subfigure}{0.32\columnwidth}
		\centering
		\includegraphics[width=\linewidth]{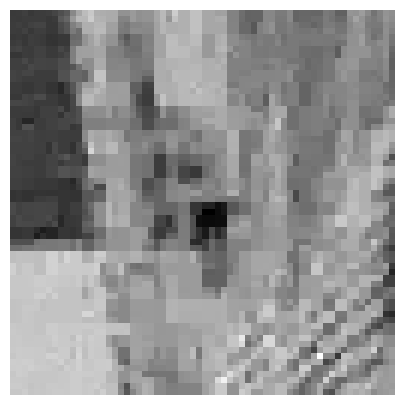}
		\caption{IRLS$_{\tau = 1}$ (0.719)}
	\end{subfigure}
	\hfill
	\begin{subfigure}{0.32\columnwidth}
		\centering
		\includegraphics[width=\linewidth]{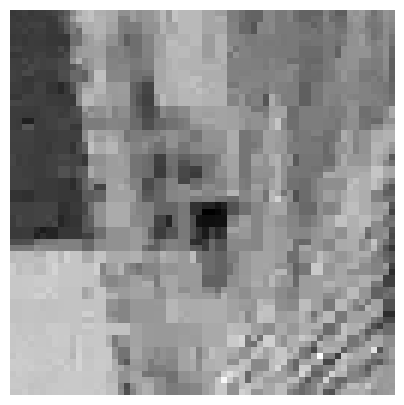}
		\caption{IRLS$_{\tau = 0.8}$ (0.710)}
	\end{subfigure}
	\hfill
	\begin{subfigure}{0.32\columnwidth}
		\centering
		\includegraphics[width=\linewidth]{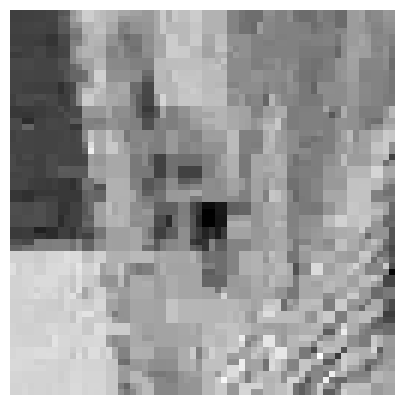}
		\caption{$\ell_1$-LS (0.688)}
	\end{subfigure}
	
		\vspace{2mm}
	
	\begin{subfigure}{0.32\columnwidth}
		\centering
		\includegraphics[width=\linewidth]{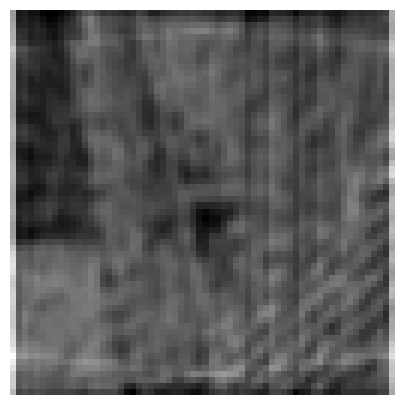}
		\caption{FBP (0.589)}
	\end{subfigure}
	\hspace{0.03\columnwidth}
	\begin{subfigure}{0.32\columnwidth}
		\centering
		\includegraphics[width=\linewidth]{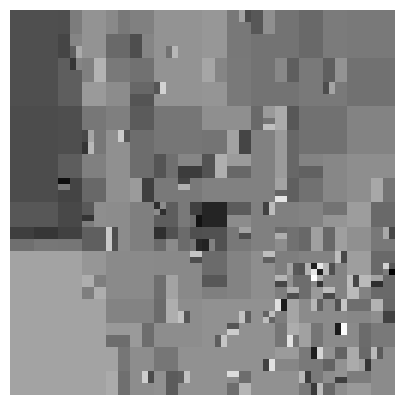}
		\caption{CoSaMP (0.470)}
	\end{subfigure}
	\hfill
	
	\caption{Image reconstructions (SSIM) using our G-IRLS with a CG based regularizer, IRLS, $\ell_1$-LS, FBP, and CoSaMP. The input to each algorithm is a vectorized Radon transform at 15 uniformly spaced angles of a 64$\times$64 image.}
	\label{fig:grid}
\end{figure}

We fix the sparsity parameter as $K = 184$ and compare the output SSIM of G-IRLS and IRLS for a Radon transform sensing matrix with varying number of angles, where fewer angles corresponds to a fatter sensing matrix. For the number of angles in $(5, 7, 9, 11, 13, 15)$ we measure the average SSIM over 15 randomly selected 64$\times$64 patches of the Barbara image and plot the average SSIM returned by G-IRLS with the regularizer from \eqref{eq:CG_R}, IRLS with $\tau = 1$ and $\tau = 0.8$, $\ell_1$-least squares ($\ell_1$-LS)~\cite{l1ls}, Fourier backprojection (FBP)~\cite{radontransform}, BCS~\cite{BCS}, and CoSaMP~\cite{CoSamp} in Fig. \ref{fig:different_angles}. For every angle, the plot shows that G-IRLS returns an image with superior SSIM on average. We also plot examples of the reconstructions by the different methods in Fig. \ref{fig:grid}.

\begin{figure}[t]
	\centering
	\includegraphics[width=\columnwidth]{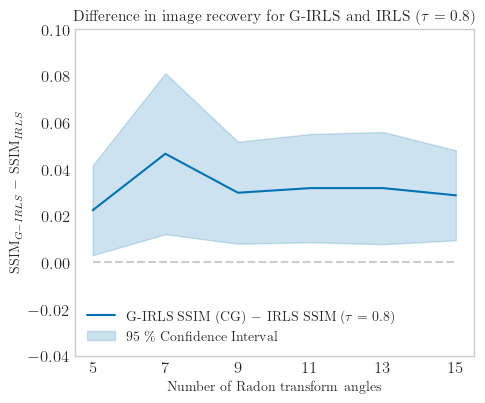}
	\caption{For the recovery problem in Fig. \ref{fig:different_angles}, we plot the average difference between SSIM returned by G-IRLS with a CG regularizer and IRLS with $\tau = 0.8$ with the 95\% confidence interval. }
	\label{fig:angles_diff}
\end{figure}

Fig. \ref{fig:angles_diff} plots the average difference in SSIM for images returned by G-IRLS and IRLS (with $\tau = 0.8$) along with 95\% confidence interval of this quantity. From the 95\% confidence interval lying above the zero line, it can be seen that G-IRLS with the CG based regularizer offers a statistically significant improvement over IRLS for tomographic image reconstruction. 

Additionally, Fig. \ref{fig:angles_CoSaMP} plots the average difference in SSIM for images returned by G-IRLS, IRLS (with $\tau = 1$), $\ell_1-LS$ \cite{l1ls}, Filtered Backprojection \cite{radontransform}, and CoSAMP \cite{CoSamp} along with 95\% confidence interval of the error quantities. From the 95\% confidence interval lying above the zero line, it can be seen that G-IRLS with the CG based regularizer offers a statistically significant improvement over all these algorithms for tomographic image reconstruction.

\begin{figure*}[t]
	\centering
	
	\begin{subfigure}{0.45\linewidth}
		\centering
		\includegraphics[width=\linewidth]{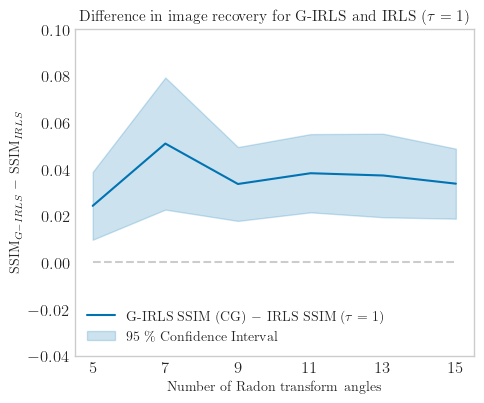}
		\caption{}
	\end{subfigure}
	\hfill
	\begin{subfigure}{0.45\linewidth}
		\centering
		\includegraphics[width=\linewidth]{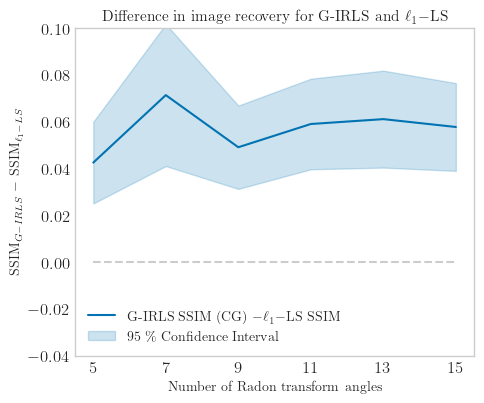}
		\caption{}
	\end{subfigure}	
	\vspace{2mm}
	
	\begin{subfigure}{0.45\linewidth}
		\centering
		\includegraphics[width=\linewidth]{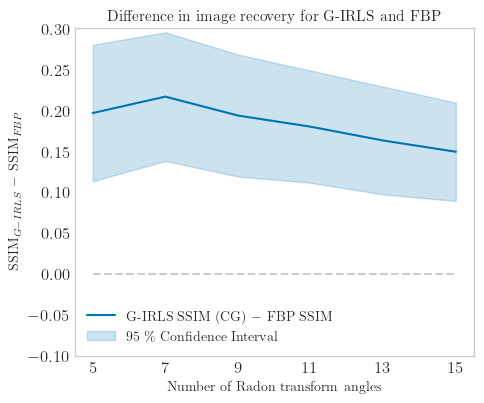}
		\caption{}
	\end{subfigure}
	\hfill
	\begin{subfigure}{0.45\linewidth}
		\centering
		\includegraphics[width=\linewidth]{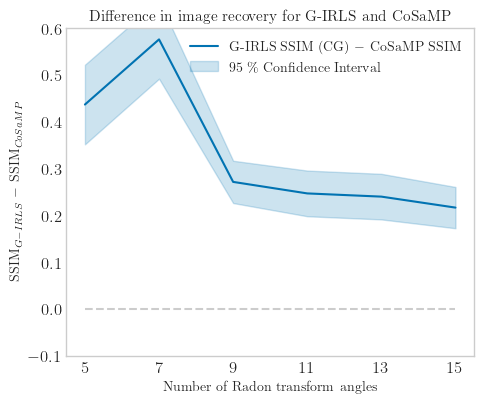}
		\caption{}
	\end{subfigure}	
	\caption{For the recovery problem in Fig. \ref{fig:different_angles}, we plot the average difference between SSIM returned by G-IRLS with a CG regularizer and IRLS with $\tau = 1$ (a), $\ell_1-$LS (b), FBP (c), and CoSaMP (d) with the 95\% confidence interval. }
	\label{fig:angles_CoSaMP}
\end{figure*}

\color{black}
\section{Discussion}
\label{sec:discussion}

In this paper, we have defined a new generalization for the notion of sparsity and a new weak null space property, which accommodates a wide range of regularizers, rather than just $\ell_1$, for the solution of linear inverse problems. In particular, we demonstrate that a subclass of CG distributions yield regularizers, through the maximum a posteriori estimate \eqref{eq:L}, that satisfy this theory. We have given novel theorems pertaining to generalizations of basis pursuit and the IRLS algorithm--both of which reduce to their original counterparts with the assumption that $R=\norm{\circ}_1$, among other simplifications. We offer numerical simulations demonstrate that regularizers beyond the $\ell_{\tau}$ norms can offer superior convergence rates in traditional sparsity problems and superior image reconstruction in tomographic imaging problems.



This paper opens the door to a new framework to view linear inverse problems where a new generalized notion of sparsity is a fundamental component. Although we have established some key connections among the theory of compressive sensing and CG methods, we recognize that there are further open questions that remain. Some questions of interest are:
\begin{enumerate}
\item How can the theory in this paper be extended to consider CG distributions more generally?

\item Need we restrict to the space of $G_\vec{y}$ in Theorem~\ref{thm:g-irls}? Our definition for the weak null space property may allow us to consider more broad sets of solutions.

\item How can the theory be extended when the parameters of the underlying CG distribution are known to lie on a lower dimensional manifold?

\item What is the role of the differential equation \eqref{eq:f-cond} in deciding whether a regularizer is well-suited for our algorithm?
\end{enumerate}

Our hope is that this paper inspires further investigation in the above directions, leading to a more complete theory for CG-based inference in a generalized sparsity setting.


\bibliographystyle{IEEEtran}
\bibliography{refs}

\appendix

\begin{lemma}
	If $R:\mathbb{R} \rightarrow \mathbb{R}$ is continuous, even, and $R(0) = 0$, then $R'(0) = 0$ if $R$ is differentiable at the origin.
\end{lemma}

\begin{proof}
	By the standard definition, $R'(0) = \lim_{h \rightarrow 0}(R(h) - R(0))/h$, and replacing $h$ with $-h$, we also have $R'(0) = \lim_{h \rightarrow 0}(R(-h) - R(0))/-h$ which is also equal to $\lim_{h \rightarrow 0}(R(h) - R(0))/-h = -R'(0)$ from the first equality. If $R'(0) = -R'(0)$ then $R'(0) = 0$.
\end{proof} \\

\begin{lemma}\label{lem:concave_nondecrease}
	If $R:[0, \infty) \rightarrow \mathbb{R}$ is concave, $R(0) = 0$, and $R(x)\geq 0$ for all $x$, then $R$ is nondecreasing. In this proof we assume that $R$ is twice continuously differentiable.  
\end{lemma}

\begin{proof}
	For sake of contradiction, assume that $R$ is decreasing at the point $x^*$ so that $R'(x^*) = m < 0$. Since $R$ is concave, $R''(x) \leq 0$ for all $x$ and $R'(x) \leq m$ for all $x> x^*$. With this, we can show that 
	\begin{align*}
		R(x) &= R(x^*) + \int_{x^*}^{x}R'(t) dt\\
		&\leq R(x^*) + \int_{x^*}^{x}m dt\\
		&\leq R(x^*) + m{x} - mx^*\\
	\end{align*}
	Choosing $\hat{x} = (mx^* - R(x^*) - 1)/m$ (which must be positive since $m$ is negative and $R$ takes positive values), we have that $R(\hat{x}) \leq -1 < 0$ which is a contradiction.  
\end{proof} \\

\begin{lemma}\label{lem:extend}
	If a function $R:[0, \infty) \rightarrow \mathbb{R}$ is subadditive, zero at the origin, and nondecreasing, then its even extension to the real numbers $G(x) = R(|x|)$ is also subadditive.  
\end{lemma}

\begin{proof}
	We want to show that $G(x) = R(|x|)$ is subadditive. That is, we want to show that $R(|x + y|) \leq R(|x|) + R(|y|)$. Since $R$ is subadditive on the nonnegative real numbers, we have that $R(|x| + |y|) \leq R(|x|) + R(|y|)$, and by the triangle inequality, $|x + y| \leq |x| + |y|$. Then, because $R$ is nondecreasing, we can conclude that $R(|x + y|) \leq R(|x| + |y|) \leq R(|x|) + R(|y|)$. 
\end{proof} \\

\begin{theorem}\label{thm:allconcok}
	Every twice differentiable concave function $R:[0, \infty) \rightarrow \mathbb{R}$ such that $R(0) = 0$ and $R(x) \geq 0$ has the even extension $G:\mathbb{R} \rightarrow \mathbb{R}$ defined by $G(x) = R(|x|)$ that is subadditive.
\end{theorem}

\begin{proof}
	From Lemmas \ref{lem:concave_nondecrease}, \ref{lem:extend}, we know that if we can prove that concave functions $R:[0, \infty) \rightarrow \mathbb{R}$ are subadditive, then its even extension is also subadditive. 
	
	From concavity, $$R((1 - t)x + ty) \geq (1 - t)R(x) + tR(y)$$for all $t \in [0, 1]$. Choosing $x = 0$, since $R(0) = 0$, we get the identity 
	\begin{equation}\label{eq:concineq}
		R(tx) \geq tR(x) 
	\end{equation}
	for all $t \in [0, 1]$. Next, for any $x, y \geq 0$, let us choose $t = x/(x + y)$, which gives us $1 - t = y/(x + y)$. These are both equivalent to the equations $x = t(x+y)$ and $y = (1 - t)(x+y) $. Using \eqref{eq:concineq}, we can find
	\begin{align*}
		R(x) &= R(t(x + y)) \geq tR(x+y) = \frac{x}{x + y}R(x + y)\\
		R(y) &= R((1-t)(x + y)) \geq (1-t)R(x+y) = \frac{y}{x + y}R(x + y)\\
	\end{align*}
	when we add these two expression, we find that $$R(x) + R(y) \geq \frac{x}{x + y}R(x + y) + \frac{y}{x + y}R(x + y) = R(x + y) $$and we conclude that $R$ must be subadditive. 
\end{proof}

\end{document}